\newsavebox{\measure@tikzpicture}
  \def\tikz@width{#1}%
  \def\tikzscale{1}\begin{lrbox}{\measure@tikzpicture}%
  \edef\tikzscale{\pgfmathresult}%
\DeclarePairedDelimiter\norm{\lvert}{\rvert}
\DeclarePairedDelimiter\inner{\langle}{\rangle}
\numberwithin{equation}{section}
\newcounter{intro}
		\newtheorem{introthm}[intro]{Theorem}
		\newtheorem{thm}[equation]{Theorem}
		\newtheorem{lem}[equation]{Lemma}
		\newtheorem{cor}[equation]{Corollary}
\theoremstyle{remark}
		\newtheorem{rem}[equation]{Remark}
\theoremstyle{definition}
		\newtheorem{defn}[equation]{Definition}
		\newtheorem{exam}[equation]{Example}
\title{A {J}ordan--{H}\"{o}lder Type Theorem for Supercharacter Theories}
\author{Shawn T. Burkett}
\address{Department of Mathematical Sciences, Kent State University, Kent,
Ohio 44240, U.S.A.} \email{sburket1@kent.edu}
\date{\today}
\subjclass[2010]{20C15}
\keywords{supercharacter theory, character theory}
\begin{document}
\maketitle
\begin{abstract}
The Jordan--H\"{o}lder Theorem is a general term given to a collection of theorems about maximal chains in suitably nice lattices. For example, the well-known Jordan--H\"{o}lder type theorem for chief series of finite groups has been rather useful in studying the structure of finite groups. In this paper, we present a Jordan--H\"{o}lder type theorem for supercharacter theories of finite groups, which generalizes the one for chief series of finite groups.
\end{abstract}
\section{Introduction}
The Jordan--H\"{o}lder Theorem for finite groups serves as a kind of unique factorization theorem, where we may think of the group as a being a \enquote{generalized} product of its simple composition factors. This fundamental result, originally proved by C. Jordan, and  strengthened to its current statement by O. H\"{o}lder (see the remark following Theorem 5.12 in \cite{JJRgps}), is a generalization of another well-known result of R. Dedekind that states that the multiset of chief factors of any chief series is an invariant of the group. In this way, the group $G$ is a generalized product of its chief factors, and if one could classify every such product, one could classify all groups with a given chief series. This group extension problem is a generally hard problem, and still an active area of research.

Another generally hard problem for a finite group is the classification of its set of supercharacters theories. A supercharacter theory $\mathsf{S}$ of $G$ is, roughly speaking, an approximation of its complex character theory, where the irreducible characters of $G$ are replaced by certain pairwise orthogonal characters, called $\mathsf{S}$-irreducible characters, constant the parts of a suitable partition of $G$. The appeal of supercharacter theory is that that the relationship between the $\mathsf{S}$-irreducible characters and $\mathsf{S}$-classes largely mimics that of the irreducible characters and conjugacy classes, and supercharacter theories may be constructed in situations where the full character theory is difficult or intractable to describe. In this paper we give a Jordan--H\"{o}lder type theorem for supercharacter theories which recovers the classical theorem for chief series. In particular, this theorem will serve as a type of unique factorization theorem for supercharacter theories, encoding not only group theoretic information but also supercharacter theoretic. To achieve this, we work with a particular subset of normal subgroups which are \enquote{seen} by the supercharacter theory $\mathsf{S}$.

A subgroup which arises as an intersection of kernels of $\mathsf{S}$-irreducible characters is called $\mathsf{S}$-normal (or supernormal). It turns out (see \cite[Lemma 3.6]{SB18nil}) that we may construct a sublattice of $\mathrm {Norm}(G)$ from any a supercharacter theory $\mathsf{S}$ of a finite group $G$ by considering only the collection $\mathrm{Norm}(\mathsf{S})$ of $\mathsf{S}$-normal subgroups of a supercharacter theory $\mathsf{S}$ of $G$. In his Ph.D. thesis \cite{AH08}, A. Hendrickson introduced the concept of $\mathsf{S}$-normality, and shows that there is a supercharacter theory $\mathsf{S}_{N/H}$ induced on $N/H$ whenever $H\le N$ are $\mathsf{S}$-normal subgroups. Hendrickson also shows that whenever $N$ is an $\mathsf{S}$-normal subgroup of $G$, that $\mathsf{S}$ is related in a predictable way to another supercharacter theory $\mathsf{S}_N\ast\mathsf{S}_{G/N}$, a supercharacter theory built from the the supercharacter theories $\mathsf{S}_N$ and $\mathsf{S}_{G/N}$. Specifically every $\mathsf{S}_N\ast\mathsf{S}_{G/N}$ is the union of some $\mathsf{S}$-classes. So in some sense, $\mathsf{S}$ and $\mathsf{S}_N\ast\mathsf{S}_{G/N}$ are both generalized products of $\mathsf{S}_N$ and $\mathsf{S}_{G/N}$, where the latter retains only some information about $\mathsf{S}$. Expanding the connection between a supercharacter theory and its supernormal subgroups, Aliniaeifard shows in \cite{FA17} that given any sublattice $L$ of $\mathrm{Norm}(G)$, there is a supercharacter theory $\mathsf{A}(L)$ whose set of supernormal subgroup is exactly $L$. Moreover, if $\mathsf{S}$ is any other supercharacter theory satisfying $\mathrm{Norm}(\mathsf{S})=L$, then every $\mathsf{A}(L)$-class is a union of $\mathsf{S}$-classes. However, for every minimal quotient $H/N$ in $L$, one has $\mathsf{A}(L)_{H/N}$ is the trivial supercharacter theory of $H/N$; i.e., its superclasses are $\{1\}$ and $H/N\setminus\{1\}$. So although this supercharacter theory shares the same lattice of supernormal subgroups as $\mathsf{S}$, it does not (in general) share the \enquote{local} supercharacter theoretic information.

The main ingredient used in a standard proof of the Jordan--H\"{o}lder Theorem, and its many variations, including the just mentioned earlier, is the modular law. Since the lattice $\mathrm {Norm}(G)$ of normal subgroups of a finite group is modular, so too will be any sublattice (a subset which is a lattice under the same meet and join); in particular $\mathrm{Norm}(\mathsf{S})$ is modular. It can then be shown that every maximal chain in $\mathrm{Norm}(\mathsf{S})$ has the same length, and the isomorphism classes of subquotients is independent of the choice of maximal chain. We call a maximal chain in $\mathrm{Norm}(\mathsf{S})$ an $\mathsf{S}$-{\bf chief series}. In particular, any two $\mathsf{S}$-chief series have the same length, and the subquotients of any $\mathsf{S}$-chief series are isomorphic up to permutation. However, much more can be said. To be more precise, we need to first give a definition. We say that the supercharacter theories $\mathsf{S}$ of $G$ and $\mathsf{T}$ of $H$ are {\bf isomorphic via} $\varphi$, where $\varphi:G\to H$ is an isomorphism, if the $\mathsf{T}$-characters of $H$ have the form $\chi\circ\varphi^{-1}$, where $\chi$ is an $\mathsf{S}$-irreducible character of $G$.

\begin{introthm}\label{diamond}
Let $\mathsf{S}$ be a supercharacter theory of $G$, and suppose that $H$ and $N$ are $\mathsf{S}$-normal. Let $\varphi$ be the isomorphism $H/(H\cap N)\to HN/N$, $h(H\cap N)\mapsto hN$. The supercharacter theories $\mathsf{S}_{H/(H\cap N)}$ and $\mathsf{S}_{HN/N}$ are isomorphic via $\varphi$.
\end{introthm}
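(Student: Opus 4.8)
The plan is to work entirely with the partitions into superclasses. First I would note that, since $\mathrm{Norm}(\mathsf S)$ is a sublattice of $\mathrm{Norm}(G)$ (\cite[Lemma 3.6]{SB18nil}), the subgroups $H\cap N$ and $HN$ are again $\mathsf S$-normal, so that $H$, $N$, $H\cap N$ and $HN$ are all $\mathsf S$-normal and every induced supercharacter theory used below is defined. The crucial reduction is the standard fact that a supercharacter theory is determined by its partition into superclasses: the superclass sums span a subalgebra of $Z(\mathbb C G)$ whose dimension is the number of superclasses, and the partition of $\mathrm{Irr}(G)$ is recovered from the primitive idempotents of this subalgebra. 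Consequently, if $\varphi\colon G_1\to G_2$ is an isomorphism and $\mathsf S_1$, $\mathsf S_2$ are supercharacter theories of $G_1$, $G_2$, then $\varphi$ induces an isomorphism $Z(\mathbb C G_1)\xrightarrow{\ \sim\ }Z(\mathbb C G_2)$ carrying superclass sums to superclass sums, and $\mathsf S_1$ and $\mathsf S_2$ are isomorphic via $\varphi$ precisely when $\varphi$ carries the set of $\mathsf S_1$-superclasses onto the set of $\mathsf S_2$-superclasses. It therefore suffices to check that $\varphi$ maps the superclasses of $\mathsf S_{H/(H\cap N)}$ onto those of $\mathsf S_{HN/N}$. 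I would also use the standard description (\cite{AH08}) that, for $B\le A$ both $\mathsf S$-normal, the superclasses of $\mathsf S_{A/B}$ are precisely the images $\pi(K)$, where $\pi\colon A\to A/B$ is the quotient map and $K$ runs over the $\mathsf S$-superclasses contained in $A$.

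Next I would carry out the comparison. Let $\pi\colon G\to G/N$ and $\rho\colon H\to H/(H\cap N)$ be the quotient maps, so that $\varphi\circ\rho=\pi|_H$. By the description above, the image under $\varphi$ of the superclass partition of $\mathsf S_{H/(H\cap N)}$ is $\{\pi(L): L\text{ is an }\mathsf S\text{-superclass with }L\subseteq H\}$, whereas the superclass partition of $\mathsf S_{HN/N}$ is $\{\pi(K): K\text{ is an }\mathsf S\text{-superclass with }K\subseteq HN\}$. One inclusion is immediate since $H\subseteq HN$. For the reverse inclusion, fix an $\mathsf S$-superclass $K\subseteq HN$; since $\pi$ maps $H$ onto $HN/N$, the subset $KN=\pi^{-1}(\pi(K))$ meets $H$, so I may choose $h\in H\cap KN$ and let $L$ be the $\mathsf S$-superclass of $h$. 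Then $L\subseteq H$ because $H$ is a union of $\mathsf S$-superclasses, and $\pi(L)=\pi(K)$ because $\pi(L)$ and $\pi(K)$ are both superclasses of $\mathsf S_{G/N}$ that contain $\pi(h)$ while the superclasses of $\mathsf S_{G/N}$ partition $G/N$. Hence the two partitions coincide and $\varphi$ matches up the superclasses, as required.

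I expect the main obstacle to lie not in this diagram chase, which is short, but in invoking its two supporting facts cleanly: that a supercharacter theory is recovered from its superclass partition, so that a superclass-preserving group isomorphism is automatically an isomorphism of supercharacter theories in the character-theoretic sense of the definition; and the explicit description of the superclasses of the induced supercharacter theory on a section $A/B$. Granting these, the only substantive input is the simultaneous $\mathsf S$-normality of $H$, $HN$ and $H\cap N$. Should the ``recovered from superclasses'' step prove awkward to cite in the form needed, the fallback would be to identify $\mathrm{Irr}(\mathsf S_{H/(H\cap N)})$ and $\mathrm{Irr}(\mathsf S_{HN/N})$ directly as explicit subsets of $\mathrm{Irr}(\mathsf S)$ and match those, at the cost of heavier bookkeeping on the character side.
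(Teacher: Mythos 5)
Your proposal is correct and follows essentially the same route as the paper's own (much terser) proof: both reduce to comparing the superclass partitions of $\mathsf{S}_{H/(H\cap N)}$ and $\mathsf{S}_{HN/N}$ as images of $\mathsf{S}$-classes under the projections $H\to H/(H\cap N)$ and $G\to G/N$, using $\varphi\circ\rho=\pi|_H$. Your write-up in fact supplies the one detail the paper leaves implicit --- that every $\mathsf{S}$-class contained in $HN$ has the same image mod $N$ as some $\mathsf{S}$-class contained in $H$ --- and that argument is sound.
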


Theorem~\ref{diamond}, reminiscent of the second isomorphism theorem for groups, shows that the structure of the supercharacter theories induced on $\mathsf{S}$-normal subquotients are not only determined by $\mathsf{S}$, but from other quotients. This relatively easy result has proven itself quite useful. In fact, it is used heavily to prove the main result of the paper.
\begin{introthm}\label{JH2}
Let $\mathsf{S}$ be a supercharacter theory of $G$. Let $G=N_1>N_2>\dotsb>N_s=1$ and $G=H_1>H_2>\dotsb>H_s=1$ be two $\mathsf{S}$-chief series. There exists a permutation $\tau$ of $\{1,2,\dotsc,s-1\}$ and isomorphisms $\varphi_i:N_i/N_{i+1}\to H_{\tau(i)}/H_{\tau(i)+1}$ such that $\mathsf{S}_{N_i/N_{i+1}}\simeq_{\varphi_i}\mathsf{S}_{H_{\tau(i)}/H_{\tau(i)+1}}$ for each $1\le i\le s-1$.
\end{introthm}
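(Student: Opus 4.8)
The plan is to induct on $|G|$, following the classical route to a Jordan--H\"{o}lder theorem through a Zassenhaus/Schreier-style comparison of the two series, but invoking Theorem~\ref{diamond} exactly where the butterfly lemma is used in the group case. Throughout I will use three things available from the discussion above: that $\mathrm{Norm}(\mathsf{S})$ is a modular sublattice of $\mathrm{Norm}(G)$ (in particular closed under intersection and join, the join being the product), so that it satisfies the Jordan--Dedekind chain condition and all its maximal chains, as well as those of any of its intervals, have a common length; that every $\mathsf{S}$-chief series from $G$ to $1$ has length $s$; and the compatibility of the induction operation $\mathsf{S}\mapsto\mathsf{S}_{N/H}$ under passage to intervals, namely that for $\mathsf{S}$-normal $K\le M\le N$ one has $\mathrm{Norm}(\mathsf{S}_{N/K})=\{L\in\mathrm{Norm}(\mathsf{S}):K\le L\le N\}/K$, that $(\mathsf{S}_{N/K})_{M/K}=\mathsf{S}_{M/K}$, and that $(\mathsf{S}_{N/K})_{(N/K)/(M/K)}$ corresponds to $\mathsf{S}_{N/M}$ under the canonical isomorphism $(N/K)/(M/K)\to N/M$. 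The upshot is that for $\mathsf{S}$-normal $M\le G$, the $\mathsf{S}$-chief series lying below $M$ are precisely the $\mathsf{S}_M$-chief series of $M$, and the supercharacter theories they induce on their factors are unchanged.

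If $N_2=H_2$, then $N_2>N_3>\dotsb>N_s=1$ and $H_2>H_3>\dotsb>H_s=1$ are two $\mathsf{S}_{N_2}$-chief series of the smaller group $N_2$, and applying the induction hypothesis, then extending the resulting permutation by fixing the index $1$ and taking the identity on the common top factor $G/N_2$, finishes this case. Now suppose $N_2\ne H_2$. Since $N_2$ is a coatom of $\mathrm{Norm}(\mathsf{S})$ and $H_2\not\le N_2$, the $\mathsf{S}$-normal subgroup $N_2\vee H_2=N_2H_2$ strictly contains $N_2$, hence $N_2H_2=G$; put $D=N_2\cap H_2$, an $\mathsf{S}$-normal subgroup. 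By the transposition of intervals in the modular lattice $\mathrm{Norm}(\mathsf{S})$, the interval $[D,N_2]$ is isomorphic to $[H_2,G]$ and $[D,H_2]$ to $[N_2,G]$, both two-element chains; so $N_2$ covers $D$ and $H_2$ covers $D$. Choosing a maximal chain $D=L_1>L_2>\dotsb>L_t=1$ of $\mathrm{Norm}(\mathsf{S})$ below $D$ and prepending $G>N_2>D$ (resp.\ $G>H_2>D$) yields two $\mathsf{S}$-chief series
\[
\text{(I)}\quad G>N_2>D=L_1>\dotsb>L_t=1,\qquad\text{(II)}\quad G>H_2>D=L_1>\dotsb>L_t=1,
\]
and comparing lengths with the given series forces $t=s-2$. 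I now compare the factors of the four series pairwise. Series (I) and the given $N$-series agree above $N_2$ and, below $N_2$, are two $\mathsf{S}_{N_2}$-chief series of $N_2$, so by induction their factor theories match up to permutation and isomorphism; likewise series (II) and the given $H$-series match below $H_2$. Finally (I) and (II) share the top $G$ and the tail from $D$ down, and their only differing factors are $G/N_2,\,N_2/D$ against $G/H_2,\,H_2/D$; here Theorem~\ref{diamond}, applied to the $\mathsf{S}$-normal pair $N_2,H_2$ (with join $G$ and meet $D$), provides isomorphisms $N_2/D\to G/H_2$ and $H_2/D\to G/N_2$ conjugating $\mathsf{S}_{N_2/D}$ to $\mathsf{S}_{G/H_2}$ and $\mathsf{S}_{H_2/D}$ to $\mathsf{S}_{G/N_2}$, so (I) and (II) also have matching factors. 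Composing the three matchings, given $N$-series $\leftrightarrow$ (I) $\leftrightarrow$ (II) $\leftrightarrow$ given $H$-series, produces the permutation $\tau$ and the isomorphisms $\varphi_i$.

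The only genuinely new ingredient in this argument is Theorem~\ref{diamond}; granting it, what remains is the familiar modular-lattice bookkeeping. Accordingly I expect the points requiring care to be administrative rather than deep: first, pinning down precisely the compatibility of $\mathsf{S}\mapsto\mathsf{S}_{N/H}$ with restriction to intervals of $\mathrm{Norm}(\mathsf{S})$, so that ``$\mathsf{S}_{N_2}$-chief series of $N_2$'' really does mean ``$\mathsf{S}$-chief series below $N_2$'' with the same induced factor theories (this is where one must lean on the preliminaries); second, checking that every subgroup produced along the way ($N_2H_2$, $N_2\cap H_2$, the $L_i$) genuinely lies in $\mathrm{Norm}(\mathsf{S})$, which is exactly what the sublattice property guarantees; and third, carrying the index bookkeeping for $\tau$ through the three compositions without error.
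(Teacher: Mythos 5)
Your proof is correct, but it follows a different route from the paper's. You run the classical ``four series'' induction: split on whether $N_2=H_2$, and when they differ, pass to $D=N_2\cap H_2$, extend a single maximal chain below $D$ to two auxiliary $\mathsf{S}$-chief series through $N_2>D$ and $H_2>D$, match each auxiliary series with the corresponding given series by induction inside $N_2$ (resp.\ $H_2$), and match the two auxiliary series with each other by a single application of Theorem~\ref{diamond} to the pair $N_2,H_2$. The paper instead argues by minimal counterexample and splits on whether some \emph{interior} term satisfies $N_j=H_j$ with $3\le j\le s-2$; in the remaining case it builds a ladder of six interpolating series via $B_3=N_2\cap H_2$, $C_4=B_3\cap N_3$, $D_4=B_3\cap H_3$, disposes of the short cases $s\le 5$ by explicit chains of Theorem~\ref{diamond} isomorphisms, and for $s\ge 6$ observes that consecutive series in the ladder share an interior term, reducing to the first case. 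Both arguments rest on exactly the same two pillars --- modularity of $\mathrm{Norm}(\mathsf{S})$ and Theorem~\ref{diamond} --- but your version localizes the use of Theorem~\ref{diamond} to one place and avoids the paper's case analysis on the triviality of $N_4$ and $N_5$, at the cost of having to state carefully the interval compatibilities you flag (that $\mathrm{Norm}(\mathsf{S}_{N_2})$ is the down-set of $N_2$ in $\mathrm{Norm}(\mathsf{S})$ with unchanged factor theories, so that ``maximal chain below $N_2$'' and ``$\mathsf{S}_{N_2}$-chief series'' coincide); these are the same facts the paper uses implicitly when it passes to $\mathsf{S}_{N_j}$ and $\mathsf{S}_{G/N_j}$, so you owe nothing the paper does not also owe.
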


As a consequence of this result, one may not only think of $G$ as being a generalized product of its $\mathsf{S}$-chief factors, but also of $\mathsf{S}$ as being a generalized product of their induced theories. 

We conclude this section by briefly discussing a potential application of Theorems~\ref{diamond} and \ref{JH2}. As mentioned above, Aliniaeifard gives a method of constructing a supercharacter theory $\mathsf{A}(L)$ from any sublattice $L$ of $\mathrm{Norm}(G)$. One may then consider the possibility of taking a sublattice $L$ of $\mathrm{Norm}(G)$ along with supercharacter theories of the minimal subquotients of $L$ to build a supercharacter theory finer than $\mathsf{A}(L)$ which retains not only the same set of supernormal subgroups, but also the same supercharacter theoretic information at the minimal subquotients. Theorems~\ref{diamond} and ~\ref{JH2} place necessary restrictions on the choices of supercharacter theories placed on the subquotients.


This work was part of the author's Ph.D. thesis at the University of Colorado Boulder under the supervision of Nathaniel Thiem. The author expresses his gratitude to Dr. Thiem for the many lengthy and insightful conversations, and helpful advice that led to this work. The author is also grateful for the numerous suggestions and valuable advice of Mark L. Lewis during the preparation of this manuscript.

\section{Supercharacter theories and supernormality}
In this section, we review the basics of supercharacter theory and supernormality. As defined in \cite{ID07}, a {\bf supercharacter theory} $\mathsf{S}$ of a group $G$ is a pair $(\mathcal{X}_{\mathsf{S}},\mathcal{K}_{\mathsf{S}})$, where $\mathcal{X}_{\mathsf{S}}$ is a partition of $\mathrm{Irr}(G)$, $\mathcal{K}_{\mathsf{S}}$ is a partition of $G$ containing $\{1\}$ satisfying the conditions
\begin{itemize}
\item$\norm{\mathcal{K}_{\mathsf{S}}}=\norm{\mathcal{X}_{\mathsf{S}}}$;
\item for each $X\in\mathcal{X}_{\mathsf{S}}$, there exists a character $\xi_X$ such that $\mathrm{Irr}(\xi_X)\subseteq X$, and $\xi_X$ is constant on the parts of $\mathcal{K}_{\mathsf{S}}$.
\end{itemize}
The characters $\xi_X$ can be taken to be the characters $\sigma_X=\sum_{\psi\in X}\psi(1)\psi$, and $\{\mathbbm{1}\}\in\mathcal{X}_{\mathsf{S}}$. For each $X\in\mathcal{X}_{\mathsf{S}}$, we call $\sigma_X$ an $\mathsf{S}$-{\bf irreducible} character, and we denote the set of all $\mathsf{S}$-irreducible characters by $\mathrm{Irr}(\mathsf{S})$.
 The parts of $\mathcal{K}_{\mathsf{S}}$ are unions of $G$-conjugacy classes; we call them $\mathsf{S}$-{\bf classes} and let $\mathrm{Cl}(\mathsf{S})$ denote the set of $\mathsf{S}$-classes.

The irreducible characters and conjugacy classes of $G$ both determine a special subset of subgroups of $G$ --- the normal subgroups. Indeed every normal subgroup appears as the intersection of the kernels of some collection of irreducible characters, and such a subgroup is the union of some conjugacy classes. For a supercharacter theory $\mathsf{S}$, an $\mathsf{S}$-{\bf normal} subgroup is a subgroup which is a union of $\mathsf{S}$-classes. Such a subgroup also arises as the intersection of some $\mathsf{S}$-irreducible characters. It is clear that $\mathsf{S}$-normal subgroups are also normal. If $N$ is an $\mathsf{S}$-normal subgroup of $G$, we may write $N\lhd_{\mathsf{S}}G$.

In his Ph.D. thesis \cite{AH08}, and in the subsequent paper \cite{AH12}, Hendrickson illustrates the importance of $\mathsf{S}$-normal subgroups in supercharacter theory. He shows that one may naturally associate a {\bf restricted} supercharacter theory $\mathsf{S}_N$ of $N$ whenever $N$ is $\mathsf{S}$-normal. The $\mathsf{S}_N$-classes are just the $\mathsf{S}$-classes contained in $N$, and the $\mathsf{S}_N$-irreducible characters are (up to a scalar) restrictions of the $\mathsf{S}$-irreducible characters. He also defines a {\bf deflated} supercharacter theory $\mathsf{S}^{G/N}$ of $G/N$ for $N\lhd_{\mathsf{S}}G$; the $\mathsf{S}^{G/N}$-irreducible characters are $\mathsf{S}$-irreducible characters that contain $N$ in their kernel (naturally considered characters of $G/N$), and the $\mathsf{S}^{G/N}$-classes are the images of the $\mathsf{S}$-classes under the canonical surjection $G\to G/N$.

It is not difficult to show that these constructions are compatible in the sense that
\[(\mathsf{S}^{G/H})_{N/H}=(\mathsf{S}_N)^{N/H}\]
whenever $H\le N$ are $\mathsf{S}$-normal subgroups of $G$. We will therefore write, unambiguously, $\mathsf{S}_{N/H}$ to denote the supercharacter theory induced on the subquotient $N/H$ from $\mathsf{S}$.

Given a normal subgroup $N$ of $G$ and supercharacter theories $\mathsf{S}$ of $N$ and $\mathsf{T}$ of $G/N$, one obtains a supercharacter theory $\mathsf{S}\ast\mathsf{T}$ of $G$, as long as the $\mathsf{S}$-classes are invariant under the conjugation action of $G$. This supercharacter theory is called the $\ast$-product (see \cite{AH12} for details) of $\mathsf{S}$ and $\mathsf{T}$ and has supercharacters
\[\mathrm{Irr}(\mathsf{S}\ast\mathsf{T})=\bigl\{\mathrm{Ind}_N^G(\psi):\mathbbm{1}\neq\psi\in\mathrm{Irr}(\mathsf{S})\bigr\}\cup\mathrm{Irr}(\mathsf{T})\]
and superclasses
\[\mathrm{Cl}(\mathsf{S}\ast\mathsf{T})=\mathrm{Cl}(\mathsf{S})\cup\bigl\{\pi^{-1}(K):N\neq K\in\mathrm{Cl}(\mathsf{T})\bigr\},\]
where the characters of $G/N$ are naturally identified with characters of $G$, and $\pi:G\to G/N$ is the canonical projection.
\section{Schur rings}
A supercharacter theory is very closely related to another algebraic structure called a Schur ring. The theory of Schur rings was primarily developed by Issai Schur and Helmut Wielandt for studying permutation groups, especially primitive permutation groups. We refer the reader to \cite{schur1933,wielandt1950,HW64} for details. 

\begin{defn}
Let $G$ be a finite group, and let $\mathscr{S}$ be a unital subalgebra of $\mathbb{C}{G}$ under the {\bf ordinary} product
\[\sum_{g\in G}a_gg\cdot\sum_{g\in G}b_gg=\sum_{g,h\in G}a_gb_hgh.\] Let $\mathcal{K}$ be a partition of $G$ and let $\widehat{K}=\sum_{g\in K}g$. \label{sums}Then $\mathscr{S}$ is called a {\bf Schur ring} over $G$ and $\mathcal{K}$ its corresponding {\bf Schur partition} if the following conditions hold:
\begin{itemize}
\item[{\normalfont\bf(1)}] $\mathscr{S}=\mathbb{C}$-$\mathrm{span}\bigl\{\widehat{K}: K\in\mathcal{K}\bigr\}$.
\item[{\normalfont\bf(2)}] $K^{-1}\coloneqq\{g^{-1}: g \in K\}\in\mathcal{K}$ for each $K\in\mathcal{K}$.
\end{itemize}
\end{defn}

The Schur ring perspective essentially allows one to study a supercharacter theory by only considering the supercharacters, or by only considering the superclasses. In some situations, it is easy to describe one and not the other, so the Schur ring perspective is also quite useful in determining if a partition of $\mathrm{Irr}(G)$ (or of $G$) gives rise to the supercharacters (superclasses) of a supercharacter theory of $G$. In this section, we outline many of the basic properties of Schur rings, as well as their connection to supercharacter theories.

The connection between supercharacter theories and Schur rings was first noticed by Hendrickson in \cite{AH12}, where the following theorem appears.
\begin{thm}{\normalfont \cite[Proposition 2.4]{AH12}}\label{bijection}\hspace{\labelsep}Let $G$ be a finite group. The function 
\[\mathsf{S}\mapsto\mathbb{C}\text{-}\mathrm{span}\bigl\{\widehat{K}: K\in\mathrm{Cl}(\mathsf{S})\}\] 
is a bijection
\[\left\{\begin{array}{c}\text{\normalfont supercharacter theories}\\\text{\normalfont of $G$}\end{array}\right\}\longrightarrow\left\{\begin{array}{c}\text{\normalfont central Schur rings}\\\text{\normalfont  over $G$}\end{array}\right\}.\]
\end{thm}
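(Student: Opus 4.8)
The plan is to produce an explicit two-sided inverse rather than to argue injectivity and surjectivity separately. Write $\mathscr{S}_{\mathsf{S}}\coloneqq\mathbb{C}\text{-}\mathrm{span}\bigl\{\widehat{K}:K\in\mathrm{Cl}(\mathsf{S})\bigr\}$ for the proposed image of a supercharacter theory $\mathsf{S}=(\mathcal{X}_{\mathsf{S}},\mathcal{K}_{\mathsf{S}})$, and for $\psi\in\mathrm{Irr}(G)$ let $e_\psi=\tfrac{\psi(1)}{|G|}\sum_{g\in G}\overline{\psi(g)}\,g$ be the associated primitive central idempotent of $\mathbb{C}G$; for $X\in\mathcal{X}_{\mathsf{S}}$ set $e_X=\sum_{\psi\in X}e_\psi$. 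The whole argument rests on one linear-algebra principle: a unital $\mathbb{C}$-subalgebra of the reduced commutative algebra $Z(\mathbb{C}G)$ is again commutative semisimple, hence isomorphic to $\mathbb{C}^m$ and equal to the span of its (uniquely determined) primitive idempotents, each of which — being an idempotent of $Z(\mathbb{C}G)\cong\mathbb{C}^{|\mathrm{Irr}(G)|}$ — has the form $\sum_{\psi\in X}e_\psi$ for a subset $X\subseteq\mathrm{Irr}(G)$.

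The first task is to verify that $\mathscr{S}_{\mathsf{S}}$ is a central Schur ring. Each superclass is a union of conjugacy classes, so $\widehat{K}\in Z(\mathbb{C}G)$ and hence $\mathscr{S}_{\mathsf{S}}\subseteq Z(\mathbb{C}G)$. The crucial identity is $\mathscr{S}_{\mathsf{S}}=\mathbb{C}\text{-}\mathrm{span}\bigl\{e_X:X\in\mathcal{X}_{\mathsf{S}}\bigr\}$: since $\sigma_X$ is constant on the parts of $\mathcal{K}_{\mathsf{S}}$, the element $e_X=\tfrac{1}{|G|}\sum_{g}\overline{\sigma_X(g)}\,g$ is a $\mathbb{C}$-combination of the $\widehat{K}$ and so lies in $\mathscr{S}_{\mathsf{S}}$; the $e_X$ are linearly independent because they have disjoint supports among the $e_\psi$; and $\dim_{\mathbb{C}}\mathscr{S}_{\mathsf{S}}=|\mathrm{Cl}(\mathsf{S})|=|\mathrm{Irr}(\mathsf{S})|=|\mathcal{X}_{\mathsf{S}}|$ by the cardinality axiom, which forces the two spans to coincide. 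Since the $e_X$ are orthogonal idempotents summing to $1_G$, this presentation shows at once that $\mathscr{S}_{\mathsf{S}}$ is a unital subalgebra and that the $e_X$ are precisely its primitive idempotents. For the inversion axiom, coefficient-wise complex conjugation is a ring automorphism of $\mathbb{C}G$ fixing each $\widehat{K}$, hence stabilizing $\mathscr{S}_{\mathsf{S}}$ and permuting its primitive idempotents; as it carries $e_X$ to $e_{\overline{X}}$, we get $\overline{X}\in\mathcal{X}_{\mathsf{S}}$, and therefore the linear inversion map $\iota\colon g\mapsto g^{-1}$ sends $e_X$ to $e_{\overline{X}}\in\mathscr{S}_{\mathsf{S}}$, so $\iota$ also stabilizes $\mathscr{S}_{\mathsf{S}}$. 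Since $\mathrm{Cl}(\mathsf{S})$ is recovered from $\mathscr{S}_{\mathsf{S}}$ as the classes of the relation \enquote{$g\sim h$ precisely when every element of $\mathscr{S}_{\mathsf{S}}$ has the same coefficient at $g$ as at $h$}, and $\iota$ respects this relation, we conclude $K^{-1}\in\mathrm{Cl}(\mathsf{S})$.

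The second task is to build the inverse map and check that both composites are the identity. Given a central Schur ring $\mathscr{S}$ with Schur partition $\mathcal{K}$, take its primitive idempotents $f_1,\dots,f_m$ (so $m=\dim_{\mathbb{C}}\mathscr{S}=|\mathcal{K}|$) and write $f_i=\sum_{\psi\in X_i}e_\psi$ with $\emptyset\neq X_i\subseteq\mathrm{Irr}(G)$; orthogonality together with $\sum_i f_i=1_G$ makes $\mathcal{X}\coloneqq\{X_1,\dots,X_m\}$ a partition of $\mathrm{Irr}(G)$ with $|\mathcal{X}|=|\mathcal{K}|$. Because $f_i=\tfrac{1}{|G|}\sum_g\overline{\sigma_{X_i}(g)}\,g$ lies in $\mathbb{C}\text{-}\mathrm{span}\{\widehat{K}\}$, the character $\sigma_{X_i}$ (whose irreducible constituents are exactly $X_i$) is constant on the parts of $\mathcal{K}$, and $1_G\in\mathscr{S}$ forces $\{1\}\in\mathcal{K}$; hence $(\mathcal{X},\mathcal{K})$ is a supercharacter theory. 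Finally, a Schur ring plainly determines its Schur partition via the relation above, so applying the two constructions to $\mathsf{S}$ returns $\mathrm{Cl}(\mathsf{S})$ and, the primitive idempotents of $\mathscr{S}_{\mathsf{S}}$ being the $e_X$, returns $\mathcal{X}_{\mathsf{S}}$; applying them to $\mathscr{S}$ returns the subalgebra spanned by the class sums of $\mathcal{K}$, which is $\mathscr{S}$ itself.

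The main obstacle — and the one step where the supercharacter-theory axioms do real work — is the identity $\mathscr{S}_{\mathsf{S}}=\mathbb{C}\text{-}\mathrm{span}\{e_X\}$, which is what upgrades the a priori merely linear space $\mathscr{S}_{\mathsf{S}}$ to an algebra; it uses the equicardinality $|\mathcal{X}_{\mathsf{S}}|=|\mathcal{K}_{\mathsf{S}}|$ crucially, along with the fact that each $\sigma_X$ is a $\mathcal{K}_{\mathsf{S}}$-class function. Everything else reduces to the linear algebra of finite-dimensional commutative semisimple $\mathbb{C}$-algebras, together with the two elementary observations that a unital subalgebra of $\mathbb{C}^{n}$ has $\{0,1\}$-supported primitive idempotents and that a Schur ring is determined, as a set partition, by its underlying subalgebra.
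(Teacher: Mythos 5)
Your proof is correct. Note that the paper itself does not prove this statement --- it is quoted from Hendrickson \cite[Proposition 2.4]{AH12} --- but your argument runs on exactly the machinery the paper deploys in the discussion preceding and in the proof of Theorem~\ref{schur}: semisimplicity of unital subalgebras of $\mathbf{Z}(\mathbb{C}G)$, the identification of idempotents with sums $e_X=\sum_{\psi\in X}e_\psi$, and the identity $e_X=\frac{1}{\norm{G}}\sum_g\overline{\sigma_X(g)}g$ linking the idempotent basis to the superclass-sum basis. Your handling of the inversion axiom via coefficientwise conjugation and the recovery of the Schur partition from the algebra are both sound, and correctly supply the injectivity that the paper would otherwise attribute to \cite[Theorem 2.2(c)]{ID07}.
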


The above bijection says that the superclass sums form a basis for a Schur ring. However, it makes no mention of the role that supercharacters play in Schur ring theory. Although it appears to be somewhat unknown in the supercharacter theory community, an analog of character theory for Schur rings was developed in the 1960s by Tamaschke in the papers \cite{OT69,OT70} which captures all of ingredients of supercharacter theory. Since Tamaschke does not limit his attention to central (or unital) Schur rings, actually quite a lot more is done than what lies within the scope of supercharacter theory. In the paper \cite{OT70}, Tamashcke exploits semisimplicity and uses representation theoretic arguments to construct the characters afforded by the simple $\mathcal{A}$-modules of a Schur ring $\mathcal{A}$. His landmark paper could be viewed as the first paper on supercharacter theory, although this vernacular was not used, of course. 

Viewing $\mathcal{A}$ as a supercharacter theory by the bijection of Theorem~\ref{bijection}, the supercharacters of $\mathcal{A}$ also form an orthogonal basis for $\mathrm{cf}(\mathcal{A})$ by \cite[Theorem 2.2 (a)]{ID07}. We use the modern perspective of supercharacters to illustrate the connection of character theory to Schur rings via idempotents of subalgebras of $\mathrm{cf}(G)$. 

Note that the group algebra $\mathbb{C}{G}$ under the ordinary product is isomorphic to the algebra $\mathbb{C}^G$ of functions $G\to\mathbb{C}$
with the convolution product $\ast$ given by
\[(\alpha\ast\beta)(g)=\sum_{x\in  G}\alpha(xg)\beta(x^{-1});\]
the isomorphism $\Theta$ given by
\[\begin{array}{rl}\Theta:\mathbb{C}{G}&\longrightarrow\;\;\mathbb{C}^G\\\sum_{g\in G}a_gg&\longmapsto\;\raisebox{-2ex}{$\begin{array}{rl}\alpha_g: G&\longrightarrow\;\mathbb{C}\\g&\longmapsto\;\, a_g.\end{array}$}\end{array}\]

\begin{thm}{\normalfont \cite[Theorem 2.18]{MI76}}\label{gencol}\hspace{\labelsep}
Let $\chi,\psi\in\mathrm{Irr}(G)$. Then
\[\frac{1}{\norm{G}}(\chi\ast\psi)(g)=\delta_{\chi,\psi}\frac{\chi(g)}{\chi(1)}.\]
\end{thm}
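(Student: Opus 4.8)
The plan is to reduce the identity to Schur's Lemma by realizing the convolution $(\chi\ast\psi)(g)$ as the trace of a single operator. Set $n=\chi(1)$, fix a representation $\rho\colon G\to GL_n(\mathbb{C})$ affording $\chi$, and recall that $\psi$, being a character, satisfies $\psi(x^{-1})=\overline{\psi(x)}$ and is constant on conjugacy classes. Using $\chi(xg)=\operatorname{tr}\bigl(\rho(x)\rho(g)\bigr)$ and moving the sum inside the trace,
\[(\chi\ast\psi)(g)=\sum_{x\in G}\psi(x^{-1})\operatorname{tr}\bigl(\rho(x)\rho(g)\bigr)=\norm{G}\operatorname{tr}\bigl(P\rho(g)\bigr),\qquad\text{where}\quad P\coloneqq\frac{1}{\norm{G}}\sum_{x\in G}\psi(x^{-1})\rho(x).\]
Thus the whole statement reduces to identifying the averaging operator $P$.

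The key step is to show that $P$ is scalar. Conjugating $P$ by $\rho(h)$ for $h\in G$ and re-indexing the sum by $y=hxh^{-1}$ gives $\rho(h)P\rho(h)^{-1}=\frac{1}{\norm{G}}\sum_{y\in G}\psi(h^{-1}y^{-1}h)\rho(y)$, and since $\psi$ is a class function $\psi(h^{-1}y^{-1}h)=\psi(y^{-1})$, so $\rho(h)P\rho(h)^{-1}=P$. Hence $P$ commutes with every $\rho(h)$, and because $\rho$ is irreducible, Schur's Lemma yields $P=\lambda I_n$ for some $\lambda\in\mathbb{C}$. Taking traces, $n\lambda=\operatorname{tr}(P)=\frac{1}{\norm{G}}\sum_{x\in G}\psi(x^{-1})\chi(x)=\frac{1}{\norm{G}}\sum_{x\in G}\chi(x)\overline{\psi(x)}$, which equals $\delta_{\chi,\psi}$ by the first orthogonality relation for $\mathrm{Irr}(G)$; therefore $\lambda=\delta_{\chi,\psi}/\chi(1)$. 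Plugging back in, $(\chi\ast\psi)(g)=\norm{G}\operatorname{tr}(\lambda\rho(g))=\norm{G}\,\lambda\,\chi(g)=\norm{G}\,\delta_{\chi,\psi}\,\chi(g)/\chi(1)$, and dividing by $\norm{G}$ gives the claim.

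I do not anticipate a real obstacle here: this is a standard ``generalized column orthogonality'' identity. The only points that require care are the choice of the averaging operator $P$ --- in particular the appearance of $\psi(x^{-1})$ rather than $\psi(x)$, which is dictated by the definition of the convolution $\ast$ --- together with a clean re-indexing in the commutation check; the remainder is bookkeeping with Schur's Lemma and the first orthogonality relation, both available in \cite{MI76}. An equivalent alternative would be to expand both traces into matrix coefficients and invoke the matrix-coefficient form of Schur orthogonality directly, but the operator argument keeps the number of indices to a minimum.
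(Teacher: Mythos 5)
Your proof is correct. The paper itself gives no proof of this statement --- it is quoted directly from Isaacs \cite{MI76} --- so there is nothing to compare against; your argument (realizing $(\chi\ast\psi)(g)$ as $\norm{G}\operatorname{tr}(P\rho(g))$ for the averaging operator $P$, showing $P$ is $G$-equivariant, and applying Schur's Lemma plus the first orthogonality relation) is the standard one, essentially equivalent to Isaacs' derivation from the identity $e_\chi e_\psi=\delta_{\chi,\psi}e_\chi$ for the central idempotents. All steps, including the careful placement of $\psi(x^{-1})$ dictated by the definition of $\ast$ and the re-indexing in the commutation check, are sound.
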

By Theorem~\ref{gencol}, often referred to as {\it generalized column orthogonality}, we see that the functions $\chi(1)\chi/\norm{G}$ for $\chi\in\mathrm{Irr}(G)$ are orthogonal idempotents for the convolution product. Since $\mathrm{Irr}(G)$ is a basis of $\mathrm{cf}(G)$, this means that $\{\chi(1)\chi/\norm{G}:\chi\in\mathrm{Irr}(G)\}$ is an orthogonal idempotent basis for $\mathrm{cf}(G)$ as an algebra under the convolution product.

Recall that the set $\{e_\psi:\psi\in\mathrm{Irr}(G)\}$, where $e_\psi=\frac{\psi(1)}{\norm{G}}\sum_{g\in G}\psi(g^{-1})g$, is an idempotent basis for $\mathbf{Z}(\mathbb{C}(G)$. Let $\mathcal{A}$ be a central Schur ring over $G$. Since $\mathcal{A}$ is a subalgebra of the commutative semisimple algebra $\mathbf{Z}(\mathbb{C}{G})$, $\mathcal{A}$ is also semisimple. So $\mathcal{A}$ has a basis of orthogonal idempotents. As noted above, an idempotent basis of $\mathbf{Z}(\mathbb{C}{G})$ with respect to the ordinary product is $\{e_\psi:\psi\in\mathrm{Irr}(G)\}$.
Since $1=\sum_\psi e_{\psi}$, there must be some partition $\mathcal{X}$ of $\mathrm{Irr}(G)$ so that $\{e_X: X\in\mathcal{X}\}$ is an idempotent basis for $\mathcal{A}$, where $e_X=\sum_{\psi\in X}e_\psi$. Since $\Theta(e_X)=\sigma_X/\norm{G}$, the set of functions $\sigma_X/\norm{G}$ for $X\in\mathcal{X}$ give an orthogonal idempotent basis for the subalgebra $\Theta(\mathcal{A})$ of $\mathrm{cf}(G)$, with respect to the convolution product. 

The group algebra is an algebra under another associative product $\circ$ called the {Hadamard} product, defined by the rule
\[\sum_{g\in G}a_gg\circ\sum_{g\in G}b_gg=\sum_{g\in G}a_gb_gg.\]
The following result states the role of the Hadamard product in Schur ring theory. This result is essentially stated in \cite{MPschur} with the added condition of being closed under inverses; however, Hendrickson noticed in \cite{AH12} that one does not need to include this condition when considering only central subalgebras. To tie these together, we include a proof.
\begin{thm}[{\normalfont cf. \cite{MPschur,AH12}}]\label{schur} 
Let $\mathcal{A}$ be a unital subalgebra of $\mathbf{Z}(\mathbb{C}{G})$ with respect to the ordinary product containing $\widehat{G}$. Then $\mathcal{A}$ is a Schur ring if and only if $\mathcal{A}$ is closed under the Hadamard product.
\end{thm}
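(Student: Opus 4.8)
The plan is to treat the two implications separately. The forward implication is formal: if $\mathcal{A}$ is a Schur ring with Schur partition $\mathcal{K}$, then for $K,L\in\mathcal{K}$ one has $\widehat{K}\circ\widehat{L}=\widehat{K\cap L}$, which equals $\widehat{K}$ when $K=L$ and $0$ otherwise because $\mathcal{K}$ is a partition; so the spanning set $\{\widehat{K}:K\in\mathcal{K}\}$ of $\mathcal{A}$, and hence $\mathcal{A}$ itself, is closed under the Hadamard product. All of the content is in the converse.

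So suppose $\mathcal{A}$ is closed under $\circ$; I must exhibit a Schur partition. Under the Hadamard product the group elements of $G$ form a complete set of orthogonal primitive idempotents of $\mathbb{C}G$ whose sum is $\widehat{G}$, so $(\mathbb{C}G,\circ)$ is commutative and semisimple; since $\mathcal{A}$ contains the $\circ$-identity $\widehat{G}$ it is a unital subalgebra, hence itself semisimple, hence the span of its own primitive $\circ$-idempotents. Each of these is an idempotent of $(\mathbb{C}G,\circ)$, so has the form $\widehat{S}$ with $S\subseteq G$; orthogonality together with the fact that these idempotents sum to $\widehat{G}$ forces their supports to be a partition $\mathcal{K}$ of $G$, and then $\mathcal{A}=\mathbb{C}\text{-}\mathrm{span}\{\widehat{K}:K\in\mathcal{K}\}$. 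This is condition (1) in the definition of a Schur ring, and it also identifies $\{\widehat{K}:K\in\mathcal{K}\}$ as the set of primitive $\circ$-idempotents of $\mathcal{A}$.

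The crux is condition (2): that $\mathcal{K}$ is closed under inversion. This is where centrality enters, and I expect it to be the only non-formal step. The device is that $\mathbf{Z}(\mathbb{C}G)$ carries two conjugate-linear involutions — complex conjugation $\kappa$ of the coefficients relative to the group basis $\{g\}$, and complex conjugation $\kappa'$ of the coefficients relative to the idempotent basis $\{e_\psi\}$ — whose composite $\kappa\circ\kappa'$ is $\mathbb{C}$-linear. Using $e_\psi=\tfrac{\psi(1)}{|G|}\sum_{g}\psi(g^{-1})\,g$ one computes $\kappa(e_\psi)=e_{\overline{\psi}}$ and $\kappa'(e_\psi)=e_\psi$, while the inversion map $\ast\colon\sum a_g g\mapsto\sum a_g g^{-1}$ satisfies $\ast(e_\psi)=e_{\overline{\psi}}$; since these $\mathbb{C}$-linear maps agree on the basis $\{e_\psi\}$, we conclude $\ast=\kappa\circ\kappa'$ on $\mathbf{Z}(\mathbb{C}G)$. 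Now $\mathcal{A}$ is plainly $\kappa$-stable (it is spanned by the $\widehat{K}$, which have coefficients in $\{0,1\}$ in the group basis), and $\mathcal{A}$ is plainly $\kappa'$-stable, because — as recalled in the discussion preceding the theorem — $\mathcal{A}=\mathbb{C}\text{-}\mathrm{span}\{e_X:X\in\mathcal{X}\}$ for a partition $\mathcal{X}$ of $\mathrm{Irr}(G)$, so $\mathcal{A}$ is spanned by elements $e_X=\sum_{\psi\in X}e_\psi$ with coefficients in $\{0,1\}$ relative to $\{e_\psi\}$. Hence $\mathcal{A}$ is stable under $\ast=\kappa\circ\kappa'$. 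Finally $\ast$ is an algebra automorphism of $(\mathbb{C}G,\circ)$ (immediate from the definition of the Hadamard product), so it restricts to an automorphism of $(\mathcal{A},\circ)$ and therefore permutes the primitive $\circ$-idempotents $\{\widehat{K}:K\in\mathcal{K}\}$ of $\mathcal{A}$; since $\ast(\widehat{K})=\widehat{K^{-1}}$, this says exactly that $\mathcal{K}$ is closed under inversion. So $\mathcal{A}$ is a Schur ring.

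The one thing to be careful about is precisely this last step: centrality makes the second real structure $\kappa'$ available, so that the evident $\kappa$-stability of $\mathcal{A}$ can be transported to $\ast$-stability; without centrality a unital, $\widehat{G}$-containing, $\circ$-closed subalgebra of $\mathbb{C}G$ need not be inverse-closed. (One could instead argue on the function side: $\Theta(\mathcal{A})\subseteq\mathrm{cf}(G)$ is spanned both by the real-valued indicators $\mathbf{1}_K=\Theta(\widehat{K})$ and by the supercharacters $\sigma_X$, and since $\sigma_X$ has real coordinates in the basis $\mathrm{Irr}(G)$ one has $\sigma_X^{\vee}=\overline{\sigma_X}\in\Theta(\mathcal{A})$, forcing $\Theta(\mathcal{A})$ to be closed under $f\mapsto f^{\vee}$ with $f^{\vee}(g)=f(g^{-1})$, hence a permutation of the $\mathbf{1}_K$.)
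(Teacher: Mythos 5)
Your proof is correct, and its skeleton coincides with the paper's: both directions start from the observation that $\mathcal{A}$ is semisimple for the ordinary product (giving the idempotent basis $\{e_X:X\in\mathcal{X}\}$) and for the Hadamard product (giving an idempotent basis $\{\widehat{K}:K\in\mathcal{K}\}$ with $\mathcal{K}$ a partition of $G$ because $\widehat{G}\in\mathcal{A}$), and the easy direction via $\widehat{K}\circ\widehat{K'}=\delta_{KK'}\widehat{K}$ is identical. Where you genuinely diverge is in verifying the inverse-closure condition (2). The paper expands $e_X=\frac{1}{\norm{G}}\sum_g\overline{\sigma_X(g)}g$ in the basis $\{\widehat{K}\}$ to conclude that each $\sigma_X$ is constant on the parts of $\mathcal{K}$, and then asserts in one sentence that $g\mapsto g^{-1}$ permutes the parts; this implicitly uses $\sigma_X(g^{-1})=\overline{\sigma_X(g)}$ together with the fact that $\mathcal{K}$ is exactly the common level-set partition of the $\sigma_X$ (a dimension count). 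You instead factor the inversion map as $\ast=\kappa\circ\kappa'$, the composite of the two conjugate-linear involutions attached to the group basis and to the basis $\{e_\psi\}$, observe that $\mathcal{A}$ is stable under each because it is spanned by elements with $\{0,1\}$ coordinates in both bases, and conclude that $\ast$ is an automorphism of $(\mathcal{A},\circ)$ and hence permutes the primitive idempotents $\widehat{K}$. The underlying identity is the same in both arguments (namely $\ast(e_\psi)=\kappa(e_\psi)=e_{\overline{\psi}}$), but your packaging makes explicit, and fully rigorous, the step the paper compresses into ``therefore permutes the parts of $\mathcal{K}$,'' and it isolates cleanly where centrality is used. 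One cosmetic remark: the paper's definition of Schur ring asks only for conditions (1) and (2), so your proof is complete as written; the paper's additional remark that $\{1\}\in\mathcal{K}$ (cited from Diaconis--Isaacs) is not needed for the statement being proved.
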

\begin{proof}
We have already observed that $\mathcal{A}$ is semisimple with respect to the ordinary product. Since $\mathcal{A}$ is closed under $\circ$, a similar argument shows that $\mathcal{A}$ is also semisimple with respect to $\circ$. Thus, we may compute idempotent bases with respect to both products. Above, we showed that there is some partition $\mathcal{X}$ of $\mathrm{Irr}(G)$ so that $\{e_X:X\in\mathcal{X}\}$ is an orthogonal idempotent basis for $\mathcal{A}$. Analogously, there is some set $\mathcal{K}$ of mutually disjoint subsets of $G$ so that $\{\widehat{K}: K\in\mathcal{K}\}$ is an idempotent basis for $\mathcal{A}$ with respect to $\circ$. Since $\widehat{G}\in\mathcal{A}$, $\mathcal{K}$ must be actually be a partition of $G$.

Now, 
\[e_X=\sum_{\psi\in X}e_\psi=\frac{1}{\norm{G}}\sum_{g\in G}\sum_{\psi\in X}\psi(1)\overline{\psi(g)}g=\frac{1}{\norm{G}}\sum_{g\in G}\overline{\sigma_X(g)}g=\sum_{K\in\mathcal{K}}a_K\widehat{K},\]
for some coefficients $a_K\in\mathbb{C}$. Thus, it must be the case that $\sigma_X$ is constant on the parts of $\mathcal{K}$ for each $X\in\mathcal{X}$. The map $g\mapsto g^{-1}$ therefore permutes the parts of $\mathcal{K}$; also, $\{1\}\in\mathcal{K}$ by \cite[Theorem 2.2]{ID07}. Hence $\mathcal{A}$ is a Schur ring. 

Now let $\mathcal{A}$ be a Schur ring with corresponding Schur partition $\mathcal{K}$. Since $K\cap K'=\varnothing$ if $K\neq K'$, we have $\widehat{K}\circ\widehat{K'}=\delta_{KK'}\widehat{K}$. It follows by linearity that $\mathcal{A}$ is closed under $\circ$.
\end{proof}

Observe that under the isomorphism $\Theta$, the algebra $\mathbb{C}{G}$ with the Hadamard product is isomorphic to the algebra $\mathbb{C}^G$ with the pointwise product $\cdot$ defined by \[(\alpha\cdot\beta)(g)=\alpha(g)\beta(g).\] Therefore, if $\mathcal{A}$ is a Schur ring, then $\Theta(\mathcal{A})$ is a subalgebra of $\mathrm{cf}(G)$ with respect to both the convolution product and the pointwise product. Similarly, if $\mathsf{S}$ is a subalgebra of $\mathrm{cf}(G)$ with respect to both the convolution and pointwise products, then $\Theta^{-1}(\mathsf{S})$ is a subalgebra of $\mathbf{Z}(\mathbb{C}{G})$ with respect to the ordinary product and the Hadamard product, and is thus a Schur ring by Proposition~\ref{schur}. Since $\Theta(\widehat{G})=\mathbbm{1}$, we have the following corollary to Theorem~\ref{schur}, which was also proved by Andrews in \cite{SAlittle}.

\begin{cor}[{\normalfont \cite[Lemma 2.2]{SAlittle}}]\label{SA14}
Let $\mathsf{S}$ be a unital subalgebra of $\mathrm{cf}(G)$ with respect to the convolution product containing $\{\mathbbm{1}\}$. Then $\mathsf{S}=\mathrm{cf}(\mathcal{A})$ for a Schur ring $\mathcal{A}$ if and only if and $\mathsf{S}$ is closed under the pointwise product.
\end{cor}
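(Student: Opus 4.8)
The plan is to obtain the corollary from Theorem~\ref{schur} by transporting the statement across the algebra isomorphism $\Theta\colon\mathbb{C}G\to\mathbb{C}^G$. The first step is to check that $\Theta$ restricts to a linear bijection $\mathbf{Z}(\mathbb{C}G)\to\mathrm{cf}(G)$: an element $\sum_{g}a_g g$ is central precisely when $g\mapsto a_g$ is constant on conjugacy classes, that is, when it is a class function. It has already been observed that, on all of $\mathbb{C}G$, the map $\Theta$ carries the ordinary product to the convolution product and the Hadamard product to the pointwise product; since $\mathbf{Z}(\mathbb{C}G)$ is closed under the Hadamard product (the pointwise product of two class functions is again a class function), $\Theta$ therefore restricts to an isomorphism from $\mathbf{Z}(\mathbb{C}G)$, equipped with the ordinary product, onto $\mathrm{cf}(G)$, equipped with the convolution product, which additionally carries the Hadamard product to the pointwise product. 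Moreover $\Theta(\widehat{G})=\mathbbm{1}$, and $\Theta$ sends the identity $1_G$ of the ordinary product to the identity of the convolution product, so the hypotheses ``$\mathsf{S}$ is unital with respect to $\ast$'' and ``$\mathbbm{1}\in\mathsf{S}$'' correspond exactly to ``$\Theta^{-1}(\mathsf{S})$ is unital with respect to the ordinary product'' and ``$\widehat{G}\in\Theta^{-1}(\mathsf{S})$'', which are the standing hypotheses of Theorem~\ref{schur}.

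With this dictionary in place, both implications are formal. For the ``if'' direction, assume $\mathsf{S}$ is a unital subalgebra of $(\mathrm{cf}(G),\ast)$ containing $\mathbbm{1}$ and closed under the pointwise product, and set $\mathcal{A}=\Theta^{-1}(\mathsf{S})$. By the first step, $\mathcal{A}$ is a unital subalgebra of $\mathbf{Z}(\mathbb{C}G)$ with respect to the ordinary product, it contains $\widehat{G}$, and it is closed under the Hadamard product; Theorem~\ref{schur} then yields that $\mathcal{A}$ is a Schur ring, and $\mathsf{S}=\Theta(\mathcal{A})=\mathrm{cf}(\mathcal{A})$. For the ``only if'' direction, suppose $\mathsf{S}=\mathrm{cf}(\mathcal{A})=\Theta(\mathcal{A})$ for a Schur ring $\mathcal{A}$; since $\mathsf{S}\subseteq\mathrm{cf}(G)$ we have $\mathcal{A}=\Theta^{-1}(\mathsf{S})\subseteq\mathbf{Z}(\mathbb{C}G)$, so $\mathcal{A}$ is a \emph{central} Schur ring, hence closed under the Hadamard product by Theorem~\ref{schur}, and applying $\Theta$ shows that $\mathsf{S}$ is closed under the pointwise product.

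There is no genuine obstacle here: all of the content sits in Theorem~\ref{schur}, and the corollary is a matter of translation. The only points requiring care are (i) verifying that $\Theta^{-1}(\mathsf{S})$ really is a \emph{central} subalgebra of $\mathbb{C}G$, which is exactly why it matters that the elements of $\mathsf{S}$ be class functions and is what makes Theorem~\ref{schur} applicable; (ii) matching the various units on the two sides so that ``unital'' and ``contains $\mathbbm{1}$/$\widehat{G}$'' transfer correctly; and (iii) invoking the identification $\mathrm{cf}(\mathcal{A})=\Theta(\mathcal{A})$ recorded just before the corollary.
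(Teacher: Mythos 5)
Your proposal is correct and follows essentially the same route as the paper: the paper's justification is precisely the paragraph preceding the corollary, which transports Theorem~\ref{schur} across the isomorphism $\Theta$, using that $\Theta$ carries the ordinary and Hadamard products to the convolution and pointwise products and that $\Theta(\widehat{G})=\mathbbm{1}$. Your added care about centrality (class functions correspond to central elements) and the matching of units is implicit in the paper but entirely consistent with it.
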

\begin{figure}[h!]
\vspace{1.5cm}
\centering
\mbox{\fontsize{8}{8}\selectfont
\begin{scaletikzpicturetowidth}{.95\textwidth}
\begin{tikzpicture}[scale=\tikzscale]
\draw (1.25,0) node[left] {$\left\{\begin{array}{c}\text{supercharacter}\\\text{theories of $G$}\end{array}\right\}$};

\draw [->,thick,postaction={decorate,decoration={text along path,text align=center,text={{$\mathsf{S}$}{$\,\mapsto\,$}{$\mathbb{C}$}-span{$\{$}{$\widehat{K}$}{$:$}{$K$}{$\in$}{Cl(}{$\mathsf{S}$}{)}{$\}$}{}},raise=1.5ex}}]      (-.5,1.15) to [bend left=45] (4,1.15);

\draw [->,thick,postaction={decorate,decoration={text along path,text align=center, text={{$\begin{array}{c}\sum\limits_{g\in G}\\\mbox{}\end{array}$}{$\mspace{-18mu}\begin{array}{c}a_g\\\mbox{}\end{array}$}{$\mspace{-18mu}\begin{array}{c}g\\\mbox{}\end{array}$}{$\mspace{-15mu}\begin{array}{c}\mapsto\\\mbox{}\end{array}$}{$\mspace{-15mu}\begin{array}{c}\alpha_g\\\mbox{}\end{array}$}{$\mspace{-18mu}\begin{array}{c}\,:\;\\\mbox{}\end{array}$}{$\mspace{-15mu}\begin{array}{c}G\\g\end{array}$}{$\mspace{-15mu}\begin{array}{c}\to\\\mapsto\end{array}$}{$\mspace{-15mu}\begin{array}{c}\mathbb{C}\\a_g\end{array}$}{}},raise=3ex}}] (6,1.15) to [bend left=45]  (10.5,1.15);

\draw (5,0) node{$\left\{\begin{array}{c}\text{unital subalgebras of}\\\text{$\mathbf{Z}(\mathbb{C}{G})$ closed under}\\\text{ $\circ$, containing $\widehat{G}$}\end{array}\right\}$};
\draw (10.75,0) node{$\left\{\begin{array}{c}\text{unital subalgebras of}\\\text{$\mathrm{cf}(G)$ closed under}\\\text{$\cdot$, containing $\mathbbm{1}$}\end{array}\right\}$};
    
\draw [<-,thick,postaction={decorate,decoration={text along path,text align=center, text={{$\sum\limits_{g\in G}$}{$\alpha(g)$}{$g$}{$\,\mapsfrom$}{$\,\alpha$}{}},raise=-3ex}}] (6,-1.15) to [bend right=45]  (10.5,-1.15);
    
 \draw [<-,thick,postaction={decorate,decoration={text along path,text align=center,text={express basis in the{}},raise=-3ex}}]      (-.5,-1.15) to [bend right=45] (4,-1.15);
 
  \draw [decoration={text along path,   text={{$\{e_\chi\}$}{ and }{$\{\widehat{K}\}$} bases{}},text align={center},raise=-6ex},decorate]
    (-.5,-1.15) to [bend right=45] (4,-1.15);
\end{tikzpicture}
\end{scaletikzpicturetowidth}
}\\[4ex]
\caption*{Supercharacter theories and Schur rings}\label{sctring}
\end{figure}

\section{$\mathsf{S}$-chief series and the Jordan--H\"{o}lder Theorem}

In this section, we prove our main theorem. We first develop several more results involving supernormal subgroups, and discuss an analog of chief series. 

\begin{lem}[{\normalfont\cite[Lemma 3.6]{SB18nil}}]\label{sublattice}Let $\mathsf{S}$ be a supercharacter theory of $G$, and suppose that $H$ and $N$ are $\mathsf{S}$-normal. Then $H\cap N$ and $HN$ are both $\mathsf{S}$-normal.
\end{lem}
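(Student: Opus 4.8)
The plan is to exploit the dictionary between supercharacter theories and central Schur rings furnished by Theorem~\ref{bijection}. Write $\mathcal{A}$ for the central Schur ring associated to $\mathsf{S}$, so that $\mathcal{A}=\mathbb{C}\text{-}\mathrm{span}\{\widehat{K}:K\in\mathrm{Cl}(\mathsf{S})\}$. Since the elements $\widehat{K}$ are supported on pairwise disjoint subsets of $G$, a subgroup $M$ of $G$ is $\mathsf{S}$-normal precisely when $\widehat{M}\in\mathcal{A}$: comparing coefficients in any expansion $\widehat{M}=\sum_{K}c_K\widehat{K}$ forces each $c_K\in\{0,1\}$ with $c_K=1$ exactly when $K\subseteq M$, so that $M=\bigcup_{c_K=1}K$ is a union of $\mathsf{S}$-classes, and the converse is immediate. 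Thus the lemma reduces to showing that $\widehat{H\cap N}\in\mathcal{A}$ and $\widehat{HN}\in\mathcal{A}$, bearing in mind that $H\cap N$ and $HN$ are genuine subgroups because $H$ and $N$ are normal.

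For $H\cap N$, observe that $H$ and $N$ are unions of parts of the partition $\mathrm{Cl}(\mathsf{S})$, hence so is their intersection; equivalently, $\widehat{H}\circ\widehat{N}=\widehat{H\cap N}$ for the Hadamard product, and $\mathcal{A}$ is closed under $\circ$ by Theorem~\ref{schur}. Either way, $H\cap N\lhd_{\mathsf{S}}G$.

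The substance of the lemma is the claim for $HN$. Here I would compute in $\mathbb{C}{G}$ under the ordinary product: since $H$ and $N$ are subgroups, the multiplication map $H\times N\to HN$ has all fibres of size $\norm{H\cap N}$, whence
\[\widehat{H}\cdot\widehat{N}=\sum_{h\in H,\,n\in N}hn=\norm{H\cap N}\,\widehat{HN}.\]
Because $H$ and $N$ are $\mathsf{S}$-normal we have $\widehat{H},\widehat{N}\in\mathcal{A}$, and $\mathcal{A}$ is a subalgebra of $\mathbf{Z}(\mathbb{C}{G})$ under the ordinary product, so $\widehat{HN}=\norm{H\cap N}^{-1}\,\widehat{H}\cdot\widehat{N}\in\mathcal{A}$. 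Combined with the first paragraph, this gives $HN\lhd_{\mathsf{S}}G$.

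The only mildly delicate point is the fibre-counting identity $\widehat{H}\cdot\widehat{N}=\norm{H\cap N}\,\widehat{HN}$ — everything else is bookkeeping — and even this is just the standard fact that, for a fixed $g=h_0n_0\in HN$, the solutions of $hn=g$ with $h\in H$ and $n\in N$ are exactly the pairs $(h_0t,t^{-1}n_0)$ with $t\in H\cap N$. An alternative route avoiding Schur rings would identify the $\mathsf{S}$-irreducible characters whose kernel contains $HN$ with those whose kernel contains both $H$ and $N$, and verify that the intersection of these kernels is $HN$; but the Schur-ring computation above seems the most economical.
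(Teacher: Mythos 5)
Your proof is correct: the characterization of $\mathsf{S}$-normality as $\widehat{M}\in\mathcal{A}$, the Hadamard-product (or direct partition) argument for $H\cap N$, and the identity $\widehat{H}\cdot\widehat{N}=\norm{H\cap N}\,\widehat{HN}$ for $HN$ are all sound. The paper itself does not prove this lemma but cites it from \cite[Lemma 3.6]{SB18nil}, and the argument given there (going back to Hendrickson) is essentially this same Schur-ring computation, so nothing further is needed.
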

As an immediate corollary, we have tha following.
\begin{cor}The set $\mathrm{Norm}(\mathsf{S})$ of all $\mathsf{S}$-normal subgroups forms a sublattice of the lattice of all normal subgroups of $G$. 
\end{cor}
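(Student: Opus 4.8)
The plan is to observe that this is a direct consequence of Lemma~\ref{sublattice}, so almost all the content is already in place; what remains is to unwind the definition of ``sublattice.'' Recall that a sublattice of a lattice $L$ is a subset that is closed under the meet and join operations of $L$. In the lattice $\mathrm{Norm}(G)$ of normal subgroups of $G$, the meet of $H$ and $N$ is the intersection $H\cap N$ and the join is the product $HN$ (which is again a normal subgroup since $H,N\trianglelefteq G$).

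First I would note that $\mathrm{Norm}(\mathsf{S})\subseteq\mathrm{Norm}(G)$: this is exactly the observation made in Section~2 that every $\mathsf{S}$-normal subgroup is normal, together with the fact that $\mathrm{Norm}(\mathsf{S})$ is nonempty (for instance $G\in\mathrm{Norm}(\mathsf{S})$, being the union of all $\mathsf{S}$-classes, and the trivial subgroup $\{1\}\in\mathrm{Norm}(\mathsf{S})$ since $\{1\}$ is itself an $\mathsf{S}$-class). Then, given $H,N\in\mathrm{Norm}(\mathsf{S})$, Lemma~\ref{sublattice} says precisely that $H\cap N$ and $HN$ are again $\mathsf{S}$-normal, i.e.\ both lie in $\mathrm{Norm}(\mathsf{S})$. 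Thus $\mathrm{Norm}(\mathsf{S})$ is closed under the meet and join of $\mathrm{Norm}(G)$, which is the definition of a sublattice.

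There is essentially no obstacle here: the only mild subtlety worth flagging is that ``sublattice'' should be understood in the strong sense (closed under the ambient meet and join), as opposed to merely ``a subset that happens to be a lattice under the induced order'' — but Lemma~\ref{sublattice} gives exactly the strong version, which is also what is needed later for the modularity argument sketched in the introduction (a sublattice of a modular lattice is modular). So the proof is a one- or two-line deduction from Lemma~\ref{sublattice}.
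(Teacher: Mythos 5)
Your proposal is correct and matches the paper, which states this corollary as an immediate consequence of Lemma~\ref{sublattice} without further proof. Your unwinding of the definition of sublattice (closure under the ambient meet $H\cap N$ and join $HN$) is exactly the intended one-line deduction.
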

The lattice $\mathrm{Norm}(\mathsf{S})$ has structure beyond the level of group theory. In order to discuss this, we will require some notion of isomorphism of supercharacter theories.
\begin{defn}\label{sctiso}Let $\varphi:G\to H$ be a group isomorphism. Let $\mathsf{S}$ be a supercharacter theory of $G$, and let $\mathsf{T}$ be a supercharacter theory of $H$. We will say that $\mathsf{S}$ and $\mathsf{T}$ are {\bf isomorphic via} $\varphi$ and write $\mathsf{S}\simeq_\varphi\mathsf{T}$ if 
\[\mathrm{Cl}(\mathsf{T})=\{\varphi(K):K\in\mathrm{Cl}(\mathsf{S})\}.\]
\end{defn}
As introduced earlier, the previous definition can be equivalently defined via characters. 
\begin{lem}
Let $\varphi:G\to H$ be a group isomorphism. The supercharacter theory $\mathsf{S}$ of $G$ and the supercharacter theory $\mathsf{T}$ of $H$ are isomorphic via $\varphi$ if and only if
\[\mathrm{Irr}(\mathsf{T})=\{\chi\circ\varphi^{-1}:\chi\in\mathrm{Irr}(\mathsf{S})\}.\]
\end{lem}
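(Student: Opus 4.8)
The plan is to prove both implications directly, moving between $\mathsf{S}$-class data and $\mathsf{S}$-character data through the bijection $\psi\mapsto\psi\circ\varphi^{-1}$ from $\mathrm{Irr}(G)$ onto $\mathrm{Irr}(H)$. The computational heart is the identity $\sigma_X\circ\varphi^{-1}=\sigma_{\varphi(X)}$, where for $X\subseteq\mathrm{Irr}(G)$ we write $\varphi(X)=\{\psi\circ\varphi^{-1}:\psi\in X\}$; this holds because $(\psi\circ\varphi^{-1})(1)=\psi(1)$, so $\sigma_{\varphi(X)}=\sum_{\psi\in X}\psi(1)(\psi\circ\varphi^{-1})=\bigl(\sum_{\psi\in X}\psi(1)\psi\bigr)\circ\varphi^{-1}$. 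The other ingredient I will use is that a supercharacter theory of $H$ is recovered from its superclass partition and, equally, from its set of supercharacters: the first is exactly the content of Theorem~\ref{bijection} (the superclass partition determines the central Schur ring, which determines the supercharacter theory), and the second follows by applying $\Theta^{-1}$ to the span of the supercharacters to recover that same Schur ring, as in the discussion preceding Corollary~\ref{SA14}.

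For the forward direction, assume $\mathrm{Cl}(\mathsf{T})=\{\varphi(K):K\in\mathrm{Cl}(\mathsf{S})\}$. Since $\psi\mapsto\psi\circ\varphi^{-1}$ is a bijection $\mathrm{Irr}(G)\to\mathrm{Irr}(H)$, the collection $\{\varphi(X):X\in\mathcal{X}_{\mathsf{S}}\}$ is a partition of $\mathrm{Irr}(H)$ with $\norm{\{\varphi(X):X\in\mathcal{X}_{\mathsf{S}}\}}=\norm{\mathcal{X}_{\mathsf{S}}}=\norm{\mathrm{Cl}(\mathsf{S})}=\norm{\mathrm{Cl}(\mathsf{T})}$, and each $\sigma_{\varphi(X)}=\sigma_X\circ\varphi^{-1}$ is constant on the sets $\varphi(K)$ because $\sigma_X$ is constant on the $\mathsf{S}$-classes. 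Hence $\bigl(\{\varphi(X):X\in\mathcal{X}_{\mathsf{S}}\},\,\mathrm{Cl}(\mathsf{T})\bigr)$ is a supercharacter theory of $H$; as it has the same superclasses as $\mathsf{T}$, it equals $\mathsf{T}$, so $\mathrm{Irr}(\mathsf{T})=\{\sigma_{\varphi(X)}:X\in\mathcal{X}_{\mathsf{S}}\}=\{\chi\circ\varphi^{-1}:\chi\in\mathrm{Irr}(\mathsf{S})\}$. The reverse direction is symmetric: if $\mathrm{Irr}(\mathsf{T})=\{\chi\circ\varphi^{-1}:\chi\in\mathrm{Irr}(\mathsf{S})\}$, then $\{\varphi(K):K\in\mathrm{Cl}(\mathsf{S})\}$ is a partition of $H$ into unions of conjugacy classes, contains $\{1\}$, has $\norm{\mathrm{Cl}(\mathsf{S})}=\norm{\mathrm{Irr}(\mathsf{S})}=\norm{\mathrm{Irr}(\mathsf{T})}$ parts, and every supercharacter of $\mathsf{T}$ is constant on its parts; thus $\bigl(\mathcal{X}_{\mathsf{T}},\{\varphi(K):K\in\mathrm{Cl}(\mathsf{S})\}\bigr)$ is a supercharacter theory of $H$ with the same supercharacters as $\mathsf{T}$, hence equals $\mathsf{T}$, giving $\mathrm{Cl}(\mathsf{T})=\{\varphi(K):K\in\mathrm{Cl}(\mathsf{S})\}$.

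I expect the only real friction to be the two uniqueness assertions invoked above --- that a supercharacter theory of $H$ is pinned down by its superclass partition and by its supercharacter set. Both are consequences of the machinery of Section 3, but they must be stated carefully, since the definition of a supercharacter theory as a pair $(\mathcal{X},\mathcal{K})$ does not, on its face, make either component determine the other. An alternative that sidesteps this bookkeeping is to argue entirely inside $\mathrm{cf}(H)$: the map $\widehat\varphi\colon f\mapsto f\circ\varphi^{-1}$ is a linear isomorphism $\mathrm{cf}(G)\to\mathrm{cf}(H)$ that is simultaneously an algebra isomorphism for the convolution product (verified by the substitution $x\mapsto\varphi(x)$ in the defining sum) and for the pointwise product; it therefore carries the convolution-and-pointwise subalgebra $\mathrm{cf}(\mathsf{S})$ onto the subalgebra of functions constant on the blocks of $\{\varphi(K):K\in\mathrm{Cl}(\mathsf{S})\}$, and it carries the primitive idempotents $\chi/\norm{G}$ of $(\mathrm{cf}(\mathsf{S}),\ast)$ to $(\chi\circ\varphi^{-1})/\norm{G}$. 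The statement $\widehat\varphi(\mathrm{cf}(\mathsf{S}))=\mathrm{cf}(\mathsf{T})$ is then equivalent, on the one hand, to the coincidence of superclass partitions and, on the other (comparing primitive idempotents, and using $\norm{G}=\norm{H}$), to the coincidence of supercharacter sets, which is exactly the asserted equivalence.
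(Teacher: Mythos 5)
Your proof is correct and follows essentially the same route as the paper: verify that the transported partitions $\{\varphi(X):X\in\mathcal{X}_{\mathsf{S}}\}$ and $\{\varphi(K):K\in\mathrm{Cl}(\mathsf{S})\}$ satisfy the axioms of a supercharacter theory of $H$, then invoke the fact that superclasses and supercharacters determine each other (the paper cites \cite[Theorem 2.2(c)]{ID07} for this, where you rederive it from the Schur ring correspondence). The extra detail you supply, including the identity $\sigma_X\circ\varphi^{-1}=\sigma_{\varphi(X)}$, is exactly what the paper's one-line proof leaves implicit.
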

\begin{proof}
This follows from the fact that the partitions $\{\mathrm{Irr}(\chi\circ\varphi^{-1}):\chi\in\mathrm{Irr}(\mathsf{S})\}$ of $\mathrm{Irr}(H)$, and $\{\varphi(K):K\in\mathrm{Cl}(\mathsf{S})\}$ of $H$ satisfy the conditions of a supercharacter theory, and that superclasses and supercharacters uniquely determine each other \cite[Theorem 2.2(c)]{ID07}.
\end{proof}

\begin{rem}
A group isomorphism $G\to H$ induces isomorphism $\mathbb{C}{G}\to\mathbb{C}{H}$ of the corresponding group algebras. The restriction of this isomorphism to a Schur ring over $G$ gives a Schur ring over $H$, and the two Schur rings are said to Cayley isomorphic. When restricting one's attention only to central Schur rings over $G$, Cayley isomorphic agrees with the concept of isomorphism of supercharacter theories given by A. Lang in \cite{langsemi}. In particular, the choice of the word \enquote{isomorphic} in Definition~\ref{sctiso} is appropriate.
\end{rem}

We now prove Theorem~\ref{diamond}, which shows that the structure of the induced theories is controlled in some sense. 
\begin{proof}[Proof of Theorem~\ref{diamond}]
Let $\pi_1:G\to G/(H\cap N)$ and $\pi_2:G\to G/N$. Then $\pi_2(H)=(\varphi\circ\pi_1)(H)$. Since the classes of $\mathsf{S}_{HN/N}$ are of form $\pi_2(K)$ for $K\in\mathrm{Cl}(\mathsf{S})$ and the classes of $\mathsf{S}_{H/(H\cap N)}$ are of form $\pi_1(K)$ for $K\in\mathrm{Cl}(\mathsf{S})$ with $K\subseteq H$, the result follows.
\end{proof}

In the situation of Theorem~\ref{diamond}, we simply write $\mathsf{S}_{H/(H\cap N)}\simeq\mathsf{S}_{HN/N}$, instead of $\mathsf{S}_{H/(H\cap N)}\simeq_\varphi\mathsf{S}_{HN/N}$.

\begin{lem}[{\normalfont\cite[Lemma 2.1]{SB18nil}}]\label{quotient}
Let $N$ be $\mathsf{S}$-normal in $G$, and let $H\le G$ contain $N$. Then $H$ is $\mathsf{S}$-normal if and only if $H/N$ is $\mathsf{S}_{G/N}$-normal.
\end{lem}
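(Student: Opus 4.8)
The plan is to argue directly from the description of $\mathsf{S}$-normal subgroups as those subgroups that are unions of $\mathsf{S}$-classes, together with the fact that the $\mathsf{S}_{G/N}$-classes of $G/N$ are precisely the images $\pi(K)$ of the $\mathsf{S}$-classes $K$ of $G$ under the canonical surjection $\pi\colon G\to G/N$. Since $N\lhd_{\mathsf{S}}G$, the deflated theory $\mathsf{S}_{G/N}$ is defined, and since $N\le H$ the set $H/N=\pi(H)$ is a subgroup of $G/N$; so in each direction it remains only to transport the ``union of classes'' condition through $\pi$ or through $\pi^{-1}$.

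For the ``only if'' direction I would write $H$ as a union of $\mathsf{S}$-classes, $H=\bigcup_i K_i$, which is possible because $H$ is $\mathsf{S}$-normal. Then $H/N=\pi(H)=\bigcup_i\pi(K_i)$ is a union of $\mathsf{S}_{G/N}$-classes, and since it is also a subgroup of $G/N$ it is $\mathsf{S}_{G/N}$-normal.

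For the ``if'' direction, assume $H/N$ is a union of $\mathsf{S}_{G/N}$-classes and fix an $\mathsf{S}$-class $K$ of $G$. Then $\pi(K)$ is a single $\mathsf{S}_{G/N}$-class, so either $\pi(K)\subseteq H/N$ or $\pi(K)\cap(H/N)=\varnothing$. In the first case I would use $N\le H$, which gives $\pi^{-1}(\pi(H))=H$, hence $K\subseteq\pi^{-1}(\pi(K))\subseteq\pi^{-1}(H/N)=H$; in the second case $K\cap H=\varnothing$, since any element of $K\cap H$ would map into $\pi(K)\cap(H/N)$. Thus every $\mathsf{S}$-class is either contained in $H$ or disjoint from it, so $H$ is a union of $\mathsf{S}$-classes and, being a subgroup, is $\mathsf{S}$-normal.

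The whole argument is essentially bookkeeping with images and preimages; the one point demanding a little care is the ``if'' direction, where the hypothesis $N\le H$ is exactly what is needed to know that $\pi^{-1}(\pi(K))=KN\subseteq H$ whenever $\pi(K)\subseteq H/N$, so that containment of images lifts to containment of the classes themselves. In particular, no Schur-ring input should be required.
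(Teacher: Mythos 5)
Your argument is correct. Note that the paper itself does not prove this lemma; it is quoted from \cite[Lemma 2.1]{SB18nil} without proof, so there is nothing to compare line by line. Your proof is the natural one given the paper's definitions: supernormal subgroups are exactly the subgroups that are unions of superclasses, and the $\mathsf{S}_{G/N}$-classes are the images $\pi(K)$ of the $\mathsf{S}$-classes, so the forward direction is immediate and the reverse direction reduces, as you say, to the observation that $N\le H$ gives $\pi^{-1}(H/N)=H$, which lets containment of $\pi(K)$ in $H/N$ lift to containment of $K$ in $H$. The one implicit input worth acknowledging is that $\{\pi(K):K\in\mathrm{Cl}(\mathsf{S})\}$ really is a partition of $G/N$ (so that each $\pi(K)$ is a single class, either contained in or disjoint from $H/N$); this is part of Hendrickson's construction of the deflated theory, which the paper takes as established, so your use of it is legitimate.
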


We will now describe an analog of chief series for supercharacter theories. Given a supercharacter theory $\mathsf{S}$ of $G$ and a $\mathsf{S}$-normal subgroup $N$ of $G$, Lemma~\ref{quotient} shows that any supernormal subgroup of $G/N$ yields an $\mathsf{S}$-normal subgroup of $G$. So for $G/N$ to be an analog of a chief factor, we should require that $\mathsf{S}_{G/N}$ have no nontrivial proper supernormal subgroups. We will say that a supercharacter theory $\mathsf{S}$ of $G$ is {\bf simple} if the only $\mathsf{S}$-normal subgroups of $G$ are $1$ and $G$. 

\begin{defn}
Let $\mathsf{S}$ be a supercharacter theory of $G$. A series $G=N_1>N_2>\dotsb>N_r=1$ is called an $\mathsf{S}$-{\bf chief} series if $N_i\lhd_{\mathsf{S}}G$ and $\mathsf{S}_{N_{i}/N_{i+1}}$ is simple for each $1\le i\le r-1$.
\end{defn}
As a result of Lemma~\ref{sublattice}, we have that the set of $\mathsf{S}$-normal subgroups of $G$ form a sublattice of the lattice of all normal subgroups of $G$. Since the latter is modular, so is the lattice of $\mathsf{S}$-normal subgroups, which we will denote by $\mathrm{Norm}(\mathsf{S})$. In particular, the standard Jordan--H\"{o}lder Theorem applies.
\begin{thm}[Jordan--H\"{o}lder]\label{JH1}Let $\mathsf{S}$ be a supercharacter theory of $G$. Let $G=N_1>N_2>\dotsb>N_r=1$ and $G=H_1>H_2>\dotsb>H_s=1$ be $\mathsf{S}$-chief series of $G$. Then $r=s$, and there exists a permutation $\tau$ of $\{1,2,\dotsc,s-1\}$ such that $N_i/N_{i+1}$ is isomorphic to $H_{\tau(i)}/H_{\tau(i)+1}$ for each $1\le i\le s-1$.
\end{thm}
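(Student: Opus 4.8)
The plan is to deduce the statement from the classical Jordan--H\"older theorem for modular lattices of finite length, applied to the lattice $\mathrm{Norm}(\mathsf{S})$.

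First I would check that an $\mathsf{S}$-chief series is precisely a maximal chain in $\mathrm{Norm}(\mathsf{S})$ from $G$ down to $1$. Suppose $N_{i+1} < N_i$ are consecutive terms and $M$ is an $\mathsf{S}$-normal subgroup of $G$ with $N_{i+1} \le M \le N_i$. Applying Lemma~\ref{quotient} inside $\mathsf{S}_{G/N_{i+1}}$, together with the compatibility $(\mathsf{S}^{G/H})_{N/H}=(\mathsf{S}_N)^{N/H}$, the subgroup $M/N_{i+1}$ is $\mathsf{S}_{N_i/N_{i+1}}$-normal in $N_i/N_{i+1}$; conversely every $\mathsf{S}_{N_i/N_{i+1}}$-normal subgroup arises this way. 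Hence there is no $\mathsf{S}$-normal subgroup strictly between $N_{i+1}$ and $N_i$ exactly when $\mathsf{S}_{N_i/N_{i+1}}$ is simple, so the $\mathsf{S}$-chief series are exactly the maximal chains of $\mathrm{Norm}(\mathsf{S})$.

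Since $\mathrm{Norm}(\mathsf{S})$ is a sublattice of the modular lattice $\mathrm{Norm}(G)$ it is modular, and it has finite length as $G$ is finite. The Jordan--H\"older--Dedekind chain condition then forces $r=s$, and the lattice-theoretic Jordan--H\"older theorem yields a bijection $\tau$ between the prime intervals $[N_{i+1},N_i]$ of the first chain and the prime intervals $[H_{\tau(i)+1},H_{\tau(i)}]$ of the second such that paired intervals are projective, i.e.\ linked by a finite chain of transpositions. It then remains to translate projectivity into a group isomorphism: a single transposition pairs $[a\wedge b,a]$ with $[b,a\vee b]$, and the second isomorphism theorem gives $a/(a\wedge b)\cong (a\vee b)/b$; iterating along the chain of transpositions produces $N_i/N_{i+1}\cong H_{\tau(i)}/H_{\tau(i)+1}$.

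I expect no serious obstacle here: this is the classical Jordan--H\"older theorem for the modular lattice $\mathrm{Norm}(\mathsf{S})$, and the only point needing care is the identification of $\mathsf{S}$-chief series with maximal chains, which is immediate from Lemma~\ref{quotient}. The genuinely new content, that the induced theories $\mathsf{S}_{N_i/N_{i+1}}$ and $\mathsf{S}_{H_{\tau(i)}/H_{\tau(i)+1}}$ also agree, is what Theorem~\ref{JH2} adds, presumably by upgrading the group isomorphisms found here with Theorem~\ref{diamond}.
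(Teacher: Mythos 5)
Your proposal is correct and follows exactly the route the paper itself takes: the paper gives no separate proof of Theorem~\ref{JH1}, simply observing that $\mathrm{Norm}(\mathsf{S})$ is a modular lattice (being a sublattice of $\mathrm{Norm}(G)$ by Lemma~\ref{sublattice}) whose maximal chains are the $\mathsf{S}$-chief series, so the standard lattice-theoretic Jordan--H\"older theorem applies. Your write-up just fills in the details the paper leaves implicit (the identification of chief series with maximal chains via Lemma~\ref{quotient}, and the translation of projective prime intervals into group isomorphisms via the second isomorphism theorem), and you correctly locate the genuinely new content in Theorem~\ref{JH2}.
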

A Jordan--H\"{o}lder Theorem for general (even nonunital) Schur rings was discovered by O. Tamaschke \cite[Theorem 10.3]{OT69}. This was done by developing the underlying category theory of Schur rings. By considering subgroups which essentially generalize subnormal subgroups and (what we call) supernormal subgroups, Tamashcke gave a Schur ring theoretic version of the Zassenhaus (Butterfly) Lemma, and of the  Schreier Refinement Theorem. In the event that a Schur ring is central, as is the case with a supercharacter theory, Tamaschke's theorem is related to Theorem~\ref{JH1}. However, Tamaschke's theorem doesn't guarantee the subquotients are isomorphic up to permutation, but instead that they have the same order and that certain Schur rings related to the subquotients are isomorphic. Theorem~\ref{diamond} allows us to prove Theorem~\ref{JH2}, a Jordan--H\"{o}lder type theorem specifically for supercharacter theories which is stronger than both Tamaschke's result and Theorem~\ref{JH1} in the case of $\mathsf{S}$-chief series. We restate Theorem~\ref{JH2} here.

\setcounter{intro}{1}
\begin{introthm}Let $\mathsf{S}$ be a supercharacter theory of $G$. Let $G=N_1>N_2>\dotsb>N_s=1$ and $G=H_1>H_2>\dotsb>H_s=1$ be two $\mathsf{S}$-chief series, which necessarily have the same length by Proposition~\ref{JH1}. There exists a permutation $\tau$ of $\{1,2,\dotsc,s-1\}$ and isomorphisms $\varphi_i:N_i/N_{i+1}\to H_{\tau(i)}/H_{\tau(i)+1}$ such that $\mathsf{S}_{N_i/N_{i+1}}\simeq_{\varphi_i}\mathsf{S}_{H_{\tau(i)}/H_{\tau(i)+1}}$ for each $1\le i\le s-1$.
\end{introthm}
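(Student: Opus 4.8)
The plan is to reproduce the classical Schreier-refinement proof of the Jordan--H\"older Theorem, with Theorem~\ref{diamond} taking the place of the ordinary second isomorphism theorem so that the Zassenhaus (butterfly) isomorphisms are promoted from mere group isomorphisms to isomorphisms of the induced supercharacter theories. For $1\le i,j\le s-1$ put
\[N_{i,j}=N_{i+1}(N_i\cap H_j),\qquad H_{j,i}=H_{j+1}(H_j\cap N_i).\]
Each of these subgroups is $\mathsf{S}$-normal in $G$ by Lemma~\ref{sublattice}, being a join of meets of $\mathsf{S}$-normal subgroups; and for fixed $i$ one has $N_i=N_{i,1}\ge N_{i,2}\ge\dotsb\ge N_{i,s}=N_{i+1}$, so the $N_{i,j}$ refine the step $N_i>N_{i+1}$, while symmetrically the $H_{j,i}$ refine $H_j>H_{j+1}$.

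The crux is a supercharacter-theoretic Zassenhaus Lemma: for all $i,j$,
\[\mathsf{S}_{N_{i,j}/N_{i,j+1}}\ \simeq\ \mathsf{S}_{H_{j,i}/H_{j,i+1}}.\]
I would establish this by two applications of Theorem~\ref{diamond} through a common subquotient. Write $H:=N_i\cap H_j$ and $D:=(N_{i+1}\cap H_j)(N_i\cap H_{j+1})$, which is a subgroup and is $\mathsf{S}$-normal by Lemma~\ref{sublattice}. Taking $N:=N_{i,j+1}$ in Theorem~\ref{diamond}, the modular law gives $H\cap N=D$ and $HN=N_{i,j}$, whence $\mathsf{S}_{(N_i\cap H_j)/D}\simeq\mathsf{S}_{N_{i,j}/N_{i,j+1}}$; taking instead $N:=H_{j,i+1}$, the modular law gives $H\cap N=D$ and $HN=H_{j,i}$, whence $\mathsf{S}_{(N_i\cap H_j)/D}\simeq\mathsf{S}_{H_{j,i}/H_{j,i+1}}$. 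Composing these two equivalences (the relation $\simeq$ of Definition~\ref{sctiso} is symmetric and transitive, since it is defined by $\mathrm{Cl}(\mathsf{T})=\{\varphi(K):K\in\mathrm{Cl}(\mathsf{S})\}$ and the underlying group isomorphisms compose and invert) yields the displayed equivalence, together with an explicit isomorphism of the two subquotients realizing it.

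It remains to collapse the two refinements. Fix $i$. Every $N_{i,j}$ is an $\mathsf{S}$-normal subgroup of $G$ lying between $N_{i+1}$ and $N_i$, so a routine check with Lemma~\ref{quotient} and the description of the restricted theory shows $N_{i,j}/N_{i+1}$ is $\mathsf{S}_{N_i/N_{i+1}}$-normal; since $\mathsf{S}_{N_i/N_{i+1}}$ is simple this forces $N_{i,j}\in\{N_i,N_{i+1}\}$, and as the sequence descends from $N_i$ to $N_{i+1}$ there is exactly one index $\tau(i)$ with $N_{i,\tau(i)}=N_i>N_{i+1}=N_{i,\tau(i)+1}$, every other factor $N_{i,j}/N_{i,j+1}$ being trivial. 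Symmetrically, for each $j$ there is exactly one index $i$ with $H_{j,i}/H_{j,i+1}$ nontrivial. Since the Zassenhaus equivalence above is in particular a group isomorphism, $N_{i,j}/N_{i,j+1}$ is nontrivial precisely when $H_{j,i}/H_{j,i+1}$ is; hence $i\mapsto\tau(i)$ is a bijection of $\{1,\dotsc,s-1\}$ (consistent with the length equality $r=s$ of Theorem~\ref{JH1}). Finally, for each $i$,
\[\mathsf{S}_{N_i/N_{i+1}}=\mathsf{S}_{N_{i,\tau(i)}/N_{i,\tau(i)+1}}\ \simeq\ \mathsf{S}_{H_{\tau(i),i}/H_{\tau(i),i+1}}=\mathsf{S}_{H_{\tau(i)}/H_{\tau(i)+1}},\]
the last equality because $H_{\tau(i),i}/H_{\tau(i),i+1}$ is nontrivial and hence is the unique nontrivial factor of the $\tau(i)$-th refined step; extracting the witnessing group isomorphism $\varphi_i$ from this chain of equivalences completes the argument.

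The main obstacle is the second paragraph: one must verify that the Zassenhaus isomorphism genuinely decomposes as two \emph{diamond} isomorphisms over a common subquotient, which reduces to the modular-law identities $H\cap N_{i,j+1}=D=H\cap H_{j,i+1}$ together with $HN_{i,j+1}=N_{i,j}$ and $HH_{j,i+1}=H_{j,i}$ (for $H=N_i\cap H_j$), and to checking that all of $N_i\cap H_j$, $D$, $N_{i,j}$, $H_{j,i}$ lie in $\mathrm{Norm}(\mathsf{S})$ so that Theorem~\ref{diamond} and the induced theories $\mathsf{S}_{\bullet/\bullet}$ are even defined; both points are supplied by Lemma~\ref{sublattice} and the fact that $\mathsf{S}$-normal subgroups are normal. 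Once that lemma is in hand, the rest --- transitivity of $\simeq$ and the combinatorial collapse of the refinement array --- is bookkeeping.
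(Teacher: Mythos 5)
Your proof is correct, but it follows a genuinely different route from the paper's. The paper argues by minimal counterexample on $\norm{G}$: it first reduces to the case where the two series share no intermediate term, then introduces the auxiliary $\mathsf{S}$-normal subgroups $B_3=N_2\cap H_2$, $C_4=B_3\cap N_3$, $D_4=B_3\cap H_3$ to build a ladder of six interpolating $\mathsf{S}$-chief series, and disposes of the short cases ($N_4=1$, $N_5=1$) by explicit chains of applications of Theorem~\ref{diamond} before feeding everything back into the induction. You instead run the classical Schreier refinement: you upgrade the Zassenhaus butterfly lemma to supercharacter theories by factoring it as two applications of Theorem~\ref{diamond} through the common subquotient $(N_i\cap H_j)/D$ with $D=(N_{i+1}\cap H_j)(N_i\cap H_{j+1})$, check via Lemma~\ref{sublattice} that all the refining terms are $\mathsf{S}$-normal, and then collapse the refinement using simplicity of the chief factors (your appeal to Lemma~\ref{quotient} to see that $N_{i,j}/N_{i+1}$ is $\mathsf{S}_{N_i/N_{i+1}}$-normal, forcing $N_{i,j}\in\{N_i,N_{i+1}\}$, is the right routine check, and the Dedekind-law identities $H\cap N_{i,j+1}=D$ and $HN_{i,j+1}=N_{i,j}$ do hold). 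Both proofs rest entirely on Theorem~\ref{diamond} plus the lattice facts; yours avoids the induction and the case analysis, produces the permutation $\tau$ and the isomorphisms $\varphi_i$ explicitly, and is closer in spirit to Tamaschke's Schur-ring Zassenhaus/Schreier machinery that the paper itself cites, whereas the paper's argument avoids setting up the double-indexed refinement array at the cost of a longer combinatorial reduction.
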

\begin{proof}
Assume the result is false, and choose $G$ to have minimal order among all groups possessing a supercharacter theory that is a counterexample to the theorem. If $s=2$, then clearly $\mathsf{S}$ is not a counterexample to the theorem, so suppose that $s=3$. If $N_2=H_2$, then the result is trivial so assume that $N_2\neq H_2$. Then $G=N_2H_2$ and $N_2\cap H_2=1$, so $G/H_2\simeq N_2$ and $G/N_2\simeq H_2$. By Theorem~\ref{diamond}, $\mathsf{S}$ is also not a counterexample to the Theorem. So let $\mathsf{S}$ be a supercharacter theory of $G$ for which the result fails, and note that we must have $s\ge 4$. The result holds trivially if $\mathrm{Norm}(\mathsf{S})$ has only one maximal chain, so we may assume that this is not the case. 

Assume that $N_j=H_j$ for some $3\le j\le s-2$. Then the two series $N_j>N_{j+1}>\dotsb>N_s=1$, and $N_j=H_j>H_{j+1}>\dotsb>H_s=1$ are both  $\mathsf{S}_{N_j}$-chief series, so, by the minimality of $G$, there exists a permutation $\tau$ of $\{j,j+1,\dotsc,s-1\}$ and isomorphisms $\varphi_i:N_i/N_{i+1}\to H_{\tau(i)}/H_{\tau(i)+1}$, for each $j\le i\le s-1$, such that  
\begin{align*}\mathsf{S}_{N_i/N_{i+1}}\simeq_{\varphi_i}\mathsf{S}_{H_{\tau(i)}/H_{\tau(i)+1}}\end{align*}
for each $j\le i\le s-1$. Similarly, by considering the supercharacter theory $\mathsf{S}_{G/N_j}$ of $G/N_j$, we obtain by minimality for some permutation $\tilde{\tau}$ of $\{1,2,\dotsc,j-1\}$ and isomorphisms $\varphi_i$ such that 
\begin{align*}\mathsf{S}_{N_i/N_{i+1}}\simeq_{\varphi_i}\mathsf{S}_{H_{\tilde{\tau}(i)}/H_{\tilde{\tau}(i)+1}}\end{align*}
for $1\le i\le j-1$. In particular, by taking $\rho$ to be the permutation of $\{1,2,\dotsc,s-1\}$ that acts by $\tilde{\tau}$ on $\{1,2,\dotsc,j-1\}$ and by $\tau$ on $\{j,j+1,\dotsc,s-1\}$, we have a permutation $\rho$ of $\{1,2,\dotsc,s-1\}$ and isomorphisms $\varphi_i:N_i/N_{i+1}\to H_{\rho(i)}/H_{\rho(i)+1}$ such that $\mathsf{S}_{N_i/N_{i+1}}\simeq_{\varphi_i}\mathsf{S}_{H_{\rho(i)}/H_{\rho(i)+1}}$ for each $1\le i\le s-1$. This is a contradiction, by the choice of $\mathsf{S}$.

We may now assume that there is no $j$ with $3\le j\le s-2$ such that $N_j=H_j$. Define $B_3=N_2\cap H_2$, $C_4=B_3\cap N_3$ and $D_4=B_3\cap H_3$. Since $N_2H_2=G$, it follows that
\[G/N_2\simeq H_2/B_3\ \,\text{and}\ \,G/H_2\simeq N_2/B_3\]
so $B_3$ is maximal in both $N_2$ and $H_2$. Similarly $C_4$ is maximal in $B_3$ and $N_3$ and $D_4$ is maximal in $B_3$ and $H_3$. Choose $B_i$, $C_i$ and $D_i$ so that we have the following $\mathsf{S}$-chief series:
\begin{align*}
{\bf(1)}\hspace{2\labelsep}&G=N_1>N_2>N_3>N_4>\dotsb>N_s=1,\\
{\bf(2)}\hspace{2\labelsep}&G=N_1>N_2>N_3>C_4>\dotsb>C_s=1,\\
{\bf(3)}\hspace{2\labelsep}&G=N_1>N_2>B_3>C_4>\dotsb>C_s=1,\\\
{\bf(4)}\hspace{2\labelsep}&G=H_1>H_2>B_3>D_4>\dotsb>D_s=1,\\\
{\bf(5)}\hspace{2\labelsep}&G=H_1>H_2>H_3>D_4>\dotsb>D_s=1,\\
{\bf(6)}\hspace{2\labelsep}&G=H_1>H_2>H_3>H_4>\dotsb>H_s=1.
\end{align*}
We may assume that $N_3>1$ since this case was covered earlier. Now, if $N_4$ is trivial, then so are $B_4$, $C_4$ and $D_4$ and $H_4$ by Proposition~\ref{JH1}. Now, $N_3$ and $B_3$ are maximal in $N_2$ and since $1=C_4=N_3\cap B_3$, the series {\bf(1)}--{\bf(6)} become
\begin{align*}
{\bf(1)}\hspace{2\labelsep}&G>N_2>N_3>1,\\
{\bf(2)}\hspace{2\labelsep}&G>N_2>N_3>1,\\
{\bf(3)}\hspace{2\labelsep}&G>N_2>B_3>1,\\
{\bf(4)}\hspace{2\labelsep}&G>H_2>B_3>1,\\
{\bf(5)}\hspace{2\labelsep}&G>H_2>H_3>1,\\
{\bf(6)}\hspace{2\labelsep}&G>H_2>H_3>1.
\end{align*}
\noindent By Theorem~\ref{diamond}, we have
\[\mathsf{S}_{N_2/N_3}\simeq\mathsf{S}_{B_3}\ \,\text{and}\ \,\mathsf{S}_{N_3}\simeq\mathsf{S}_{N_2/B_3}.\]

Similarly, we have
\[\mathsf{S}_{H_2/H_3}\simeq\mathsf{S}_{B_3}\ \,\text{and}\ \,\mathsf{S}_{H_3}\simeq\mathsf{S}_{H_2/B_3}\]
and \[\mathsf{S}_{N_2/B_3}\simeq\mathsf{S}_{G/H_2}\ \,\text{and}\ \,\mathsf{S}_{G/N_2}\simeq\mathsf{S}_{H_2/B_3}.\]
These isomorphisms show that $\mathsf{S}$ is not a counterexample to the theorem, so $N_4$ must be nontrivial. 

If $N_5$ is trivial, then so are $B_5$, $C_5$ and $D_5$ and $H_5$ by Proposition~\ref{JH1}. Then the $\mathsf{S}$-chief series {\bf(1)}--{\bf(6)} become

\begin{align*}
{\bf(1)}\hspace{2\labelsep}&G>N_2>N_3>N_4>1,\\
{\bf(2)}\hspace{2\labelsep}&G>N_2>N_3>C_4>1,\\
{\bf(3)}\hspace{2\labelsep}&G>N_2>B_3>C_4>1,\\
{\bf(4)}\hspace{2\labelsep}&G>H_2>B_3>D_4>1,\\
{\bf(5)}\hspace{2\labelsep}&G>H_2>H_3>D_4>1,\\
{\bf(6)}\hspace{2\labelsep}&G>H_2>H_3>H_4>1.
\end{align*}

\noindent Theorem~\ref{diamond} can be used repeatedly as in the last case to obtain the chains of isomorphisms

\begin{align*}
\mathsf{S}_{G/N_2}&\simeq\mathsf{S}_{H_2/B_3}\simeq\mathsf{S}_{H_3/D_4}\simeq\mathsf{S}_{H_4},\\
\mathsf{S}_{N_2/N_3}&\simeq\mathsf{S}_{B_3/C_4}\simeq\mathsf{S}_{D_4}\simeq\mathsf{S}_{H_3/H_4},\\
\mathsf{S}_{N_3/N_4}&\simeq\mathsf{S}_{C_4}\simeq\mathsf{S}_{B_3/D_4}\simeq\mathsf{S}_{H_2/H_3},\\
\shortintertext{and}
\mathsf{S}_{N_4}&\simeq\mathsf{S}_{N_3/C_4}\simeq\mathsf{S}_{N_2/B_3}\simeq\mathsf{S}_{G/H_2}.
\end{align*}
By the choice of $\mathsf{S}$, we must therefore have $N_5>1$. 

Since $N_5>1$, we have $s\ge 6$. Hence, we are reduced to the first case for each pair of chains {\bf(i)} and {\bf(i+1)}, $1\le i\le 5$. Composing these permutations and isomorphisms, we reach a final contradiction.\end{proof}

\begin{exam}
Let $G=\inner{x,y:x^3=y^6=[x,y]=1}$ be the group $C_3\times C_6$. One may easily verify that the set
\[\bigl\{\{1\},\{y,y^3,y^5\},\{y^2,y^4\},\{xy^2\},\{x^2y^4\},\{x,xy^4\},\{x^2,x^2y^2\},K_1,K_2\bigr\},\]
where $K_i=\{x^iy,x^iy^3,x^iy^5\}$ for $i=1,2$, gives the superclasses for a supercharacter theory $\mathsf{S}$ of $G$. The set of $\mathsf{S}$-normal subgroups of $G$ is $\{1,\inner{xy^2},\inner{y^2},\inner{y},\inner{x,y^2},G\}$. Therefore, the lattice of $\mathsf{S}$-normal subgroups is given by the following.

\begin{center}
\begin{tikzpicture}
\draw (1.5,0) node {\footnotesize$1$};
\draw (1.5,1) node {\footnotesize$\inner{xy^2}$};
\draw (0,1) node {\footnotesize$\inner{y^2}$};
\draw (0,2) node {\footnotesize$\inner{x,y^2}$};
\draw (-1.5,2) node {\footnotesize$\inner{y}$};
\draw (-1.5,3) node {\footnotesize$G$};
	\draw (1.5,.25)--(1.5,.75);
	\draw (0,1.25)--(0,1.75);
	\draw (-1.5,2.25)--(-1.5,2.75);
\draw (1.25,.25)--(.35,.79);
\draw (1.25,1.25)--(.35,1.79);
\draw (-.35,1.21)--(-1.25,1.75);
\draw (-.35,2.21)--(-1.25,2.75);
\end{tikzpicture}
\end{center}
Observe that
 \begin{align*}\mathsf{S}_{\inner{xy^2}}&=\bigl\{\{1\},\{xy^2\},\{x^2y^4\}\bigr\},
 \shortintertext{and} 
\mathsf{S}_{\inner{x,y^2}/\inner{y^2}}&=\bigl\{\{\inner{y^2}\},\{x\inner{y^2}\},\{x^2\inner{y^2}\}\bigr\}.
\end{align*} 
These supercharacter theories both coincide with the finest supercharacter theory of $C_3$, and they are all isomorphic via the canonical map $\inner{xy^2}\to\inner{x,y^2}/\inner{y^2}$. We also have
\[\mathsf{S}_{G/\inner{y}}=\bigl\{\{\inner{y}\},\{x\inner{y}\},\{x^2\inner{y}\}\bigr\},\]
which is isomorphic to $\mathsf{S}_{\inner{xy^2}}$ via the canonical map $\inner{xy^2}\to G/\inner{y}$ and to $\mathsf{S}_{\inner{x,y^2}/\inner{y^2}}$ via the canonical map $\inner{x,y^2}/\inner{y^2}\to G/\inner{y}$
Next, we note that 
\begin{align*}
\mathsf{S}_{G/\inner{x,y^2}}&=\bigl\{\{\inner{x,y^2}\},\{y\inner{x,y^2}\}\bigr\}\\
\shortintertext{and} 
\mathsf{S}_{\inner{y}/\inner{y^2}}&=\bigl\{\{\inner{y^2}\},\{y\inner{y^2}\}\bigr\}, 
\end{align*}
and that  these supercharacter theories coincide with the finest (only) supercharacter theory of $C_2$, and they are also isomorphic via the canonical map $\inner{y}/\inner{y^2}\to G/\inner{x,y^2}$. Now observe that
\begin{align*}
\mathsf{S}_{\inner{y^2}}&=\bigl\{\{1\},\{y^2,y^4\}\bigr\}\\
\shortintertext{and} 
\mathsf{S}_{\inner{x,y^2}/\inner{xy^2}}&=\bigl\{\{\inner{xy^2}\},\{x\inner{xy^2},xy^4\inner{xy^2}\}\bigr\}\\
&=\bigl\{\{\inner{xy^2}\},\{y^4\inner{xy^2},y^2\inner{xy^2}\}\bigr\}.
\end{align*}
Therefore, we have these two supercharacter theories are isomorphic via the canonical map $\inner{y^2}\to\inner{x,y^2}/\inner{xy^2}$.

Finally we note that the only three $\mathsf{S}$-chief series are the following:
\begin{align*}
G&\ge \inner{x,y^2}\ge\inner{xy^2}\ge1,\\
G&\ge \inner{x,y^2}\ge\inner{y^2}\ge1,\\
\shortintertext{and}
G&\ge \inner{y}\ge\inner{y^2}\ge1.\\
\end{align*}
The above observations give us the permutations and isomorphisms guaranteed by the theorem.
\end{exam}
\bibliographystyle{alpha}
\bibliography{bio}

\begin{thebibliography}{Tam70}

\bibitem[Ali17]{FA17}
Farid Aliniaeifard.
\newblock Normal supercharacter theories and their supercharacters.
\newblock {\em J. Algebra}, 469:464--484, 2017.

\bibitem[And16]{SAlittle}
Scott Andrews.
\newblock Supercharacter theories constructed by the method of little groups.
\newblock {\em Comm. Algebra}, 44(5):2152--2179, 2016.

\bibitem[Bur18]{SB18nil}
Shawn~T. Burkett.
\newblock An analog of nilpotence arising from supercharacter theory.
\newblock {\em ArXiv e-prints}, November 2018.
\newblock \href{https://arxiv.org/abs/1811.01736}{arXiv:1811.01736}.

\bibitem[DI07]{ID07}
Persi Diaconis and I.~M. Isaacs.
\newblock {Supercharacters and superclasses for algebra groups}.
\newblock {\em Transactions of The American Mathematical Society},
  360:2359--2393, 2007.

\bibitem[Hen08]{AH08}
Anders Hendrickson.
\newblock {\em Supercharacter theories of finite cyclic groups}.
\newblock PhD thesis, University of Wisconsin -- Madison, 2008.

\bibitem[Hen12]{AH12}
Anders Hendrickson.
\newblock Supercharacter theory constructions corresponding to {S}chur ring
  products.
\newblock {\em Communications in Algebra}, 40(12):4420--4438, 2012.

\bibitem[Isa76]{MI76}
I.M. Isaacs.
\newblock {\em Character Theory of Finite Groups}.
\newblock Academic Press Inc, 1976.

\bibitem[Lan14]{langsemi}
Alexander Lang.
\newblock Supercharacters theories and semidirect products.
\newblock {\em ArXiv e-prints}, May 2014.
\newblock \href{https://arxiv.org/abs/1405.1764}{arXiv:1405.1764}.

\bibitem[MP09]{MPschur}
Mikhail~{E.} Muzychuk and Ilya Ponomarenko.
\newblock Schur rings.
\newblock {\em European J. Combin.}, 30(6):1526--1539, 2009.

\bibitem[Rot99]{JJRgps}
J.~Rotman.
\newblock {\em An Introduction to the Theory of Groups}.
\newblock Graduate Texts in Mathematics. Springer New York, 1999.

\bibitem[Sch33]{schur1933}
Issai Schur.
\newblock {\em Zur {T}heorie der einfach transitiven {P}ermutationsgruppen}.
\newblock Verlag der Akademie der Wissenschaften, 1933.

\bibitem[Tam69]{OT69}
Olaf Tamaschke.
\newblock On the theory of {S}chur-rings.
\newblock {\em Annali di Matematica Pura ed Applicata}, 81(1):1--43, 1969.

\bibitem[Tam70]{OT70}
Olaf Tamaschke.
\newblock On {S}chur-rings which define a proper character theory on finite
  groups.
\newblock {\em Math. Z.}, 117:340--360, 1970.

\bibitem[Wie50]{wielandt1950}
Helmut Wielandt.
\newblock Zur {T}heorie der einfach transitiven {P}ermutationsgruppen. {II}.
\newblock {\em Mathematische Zeitschrift}, 52(1):384--393, 1950.

\bibitem[Wie64]{HW64}
H.~Wielandt.
\newblock {\em Finite permutation groups}.
\newblock Academic paperbacks. Academic Press, 1964.

\end{thebibliography}
\end{document}